\newcommand\eps{\varepsilon}
\newcommand\Z{{\mathbb{Z}}}
\newcommand\supp{{\operatorname{Supp}}}
\theoremstyle{plain}
  \newtheorem{theorem}[subsection]{Theorem}
  \newtheorem{proposition}[subsection]{Proposition}
  \newtheorem{lemma}[subsection]{Lemma}
  \newtheorem{corollary}[subsection]{Corollary}
\theoremstyle{remark}
  \newtheorem{remark}[subsection]{Remark}
\theoremstyle{definition}
  \newtheorem{definition}[subsection]{Definition}
\begin{document}

\title[]{Analyticity of extremisers to the Airy Strichartz inequality}
\author{Dirk Hundertmark}
\address{Department of Mathematics, University of Illinois at Urbana-Champaign, Urbana, IL 61801}
\email{dirk@math.uiuc.edu}
\author{Shuanglin Shao}
\address{Institute for Mathematics and its applications, University of Minnesota, Minneapolis, MN 55455}
\email{slshao@ima.umn.edu}
\subjclass[2000]{35Q53; 42A38.}

\vspace{-0.1in}
\date{\today}
\begin{abstract}We prove that there exists an extremal function to the Airy Strichartz inequality
$$
\|e^{-t\partial_x^3} f\|_{L^8_{t,x}(\mathbb{R}\times \mathbb{R})} \le C\|f\|_{L^2(\mathbb{R})},
$$
by using the linear profile decomposition. Furthermore we show that, if $f$ is an extremiser, then $f$ is extremely fast decaying in Fourier space and so $f$ can be extended to be an entire function on the whole complex domain. The rapid decay of the Fourier transform of extremisers is established with a bootstrap argument which relies on a refined bilinear Airy Strichartz estimate and a weighted Strichartz inequality.
\end{abstract}

\maketitle

\section{Introduction}
It is well known that the (generalized) Korteweg-de Vries equations (KdV or gKdV) are good approximations to the evolution of waves on shall water surface \cite{Craig:1985:water-wave-KdV, Schneider-Wayne:2000:KdV-water-wave-zero-surface-tension, Schneider-Wayne:2002:KdV-water-wave-surface-tension}:
\begin{equation}\label{eq:gkdv}
\partial_t u+\partial_x^3 u\pm \partial_x(u^p)=0
\end{equation}
for $p\ge 2$. The linear form is the Airy equation
\begin{equation}\label{eq:airy}
\partial_t u+\partial_x^3 u=0.
\end{equation} In general, for an initial data $u(0)=f(x)$ the solution $e^{-t\partial_x^3} f$ to the Airy solution can be expressed as
\begin{equation}\label{eq-3}
e^{-t\partial_x^3} f(x):=(2\pi)^{-1/2}\int_\mathbb{R} e^{ixk+itk^3}\widehat{f}(k)dk.
\end{equation}
The linear Strichartz inequality for \eqref{eq:airy} asserts that
\begin{equation}\label{eq:strichartz}
 \|D^{\alpha} e^{-t\partial_x^3}f\|_{L^q_tL^r_x} \lesssim \|f\|_2,
\end{equation}
for $-\alpha+\frac 3q+\frac 1r=\frac 12$ and $-1/2<\alpha \le 1/q$,
see \cite[Theorem 2.1]{Kenig-Ponce-Vega:1991:dispersive-estimates}.
When $\alpha=1/q$, the inequality above is called ``endpoints" while ``nonendpoints" for $\alpha<1/q$. It plays an important role in establishing local or global wellposedness theory for the Cauchy problem of \eqref{eq:gkdv}, see for instance \cite{Kenig-Ponce-Vega:1991:dispersive-estimates, Tao:2006-CBMS-book}.  In this paper, we study the the following symmetrical Strichartz inequality
\begin{equation}\label{eq:airy-strichartz}
\|e^{-t\partial_x^3} f\|_{L^8_{t,x}(\mathbb{R}\times \mathbb{R})} \le C\|f\|_{L^2(\mathbb{R})},
\end{equation}
and consider ``extremisers"  for \eqref{eq:airy-strichartz}: the existence of extremisers and characterization of some of their properties.
%Some results on the existence of extremisers for general non-endpoint Strichartz inequality is also established, see Remark \ref{re-2}.

To begin with, we denote the optimal constant for \eqref{eq:airy-strichartz} by $\mathcal{A}$:
\begin{equation}\label{eq:best-constant}
 \mathcal{A}:=\sup\{\|e^{-t\partial_x^3}f\|_{L^8_{t,x}}:\,\|f\|_2=1\}.
\end{equation}
A simple argument, together with \eqref{eq:strichartz} shows that $\mathcal{A}<\infty$, see the proof of Theorem \ref{thm:profile L8L8}.
\begin{definition}
A function $f\in L^2$ is said to be an extremiser for \eqref{eq:airy-strichartz} if $f$ is not equal to the zero function a.e.\ and
\begin{equation}\label{eq-4}
\|e^{-t\partial_x^3} f\|_{L^8_{t,x}(\mathbb{R}\times \mathbb{R})} = \mathcal{A} \|f\|_{L^2(\mathbb{R})}.
\end{equation}
\end{definition}
The first result is the following theorem.
\begin{theorem}\label{thm-existence} There exists an extremal function $f\in L^2$ for the Airy Strichartz inequality \eqref{eq:airy-strichartz}.
\end{theorem}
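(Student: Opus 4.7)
The plan is a standard concentration-compactness argument via linear profile decomposition; the main technical content is packaged in Theorem \ref{thm:profile L8L8}, and what remains is a convexity step. Take a maximising sequence $\{f_n\}\subset L^2(\mathbb{R})$ with $\|f_n\|_2=1$ and $\|e^{-t\partial_x^3}f_n\|_{L^8_{t,x}}\to \mathcal{A}$. The inequality \eqref{eq:airy-strichartz} is invariant under the symmetry group generated by space--time translations, $L^2$-preserving parabolic scalings $f(x)\mapsto \lambda^{1/2}f(\lambda x)$, and Fourier modulations $f\mapsto e^{ix\xi_0}f$; each leaves both $\|f\|_2$ and $\|e^{-t\partial_x^3}f\|_{L^8_{t,x}}$ invariant. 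A maximising sequence can therefore only be expected to be precompact modulo this group.

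I would then invoke the linear profile decomposition (Theorem \ref{thm:profile L8L8}) to write, after extraction of a subsequence,
\[
f_n=\sum_{j=1}^{J} T_n^j\phi^j + r_n^J,
\]
with fixed profiles $\phi^j\in L^2$, symmetry-group elements $T_n^j$ whose parameters diverge pairwise in $(n,j)$, and a remainder satisfying $\limsup_{n\to\infty}\|e^{-t\partial_x^3}r_n^J\|_{L^8_{t,x}}\to 0$ as $J\to\infty$. Two consequences are the asymptotic $L^2$ Pythagorean identity
\[
\|f_n\|_2^2=\sum_{j=1}^J \|\phi^j\|_2^2 + \|r_n^J\|_2^2 + o_n(1),
\]
and, because each $T_n^j$ is a symmetry and the parameters diverge, the asymptotic spacetime decoupling
\[
\|e^{-t\partial_x^3}f_n\|_{L^8_{t,x}}^8=\sum_{j=1}^J \|e^{-t\partial_x^3}\phi^j\|_{L^8_{t,x}}^8 + o_n(1) + o_J(1).
\]

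Bounding each profile by Strichartz, $\|e^{-t\partial_x^3}\phi^j\|_{L^8_{t,x}}^8\le \mathcal{A}^8 \|\phi^j\|_2^8$, and combining with the elementary estimate $\sum_j a_j^4\le (\sup_j a_j)^3 \sum_j a_j$ applied to $a_j=\|\phi^j\|_2^2$ (so $\sum_j a_j\le 1$), passage to the limit $n\to\infty$ then $J\to\infty$ yields
\[
\mathcal{A}^8 \le \mathcal{A}^8 \sup_j \|\phi^j\|_2^6.
\]
Hence $\sup_j\|\phi^j\|_2=1$. Together with the Pythagorean identity this forces exactly one profile, say $\phi^1$, to satisfy $\|\phi^1\|_2=1$, all other profiles to vanish, and $\|r_n^J\|_2\to 0$ as $n,J\to\infty$. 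Equality must then hold throughout, so $\|e^{-t\partial_x^3}\phi^1\|_{L^8_{t,x}}=\mathcal{A}\|\phi^1\|_2$ and $\phi^1$ is the desired extremiser.

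The main obstacle is the profile decomposition itself: one needs a quantitative inverse Strichartz inequality capturing \emph{every} way a bounded $L^2$ sequence can concentrate under the full Airy symmetry group. Unlike for the Schr\"odinger equation, Fourier modulations interact non-trivially with the cubic dispersion $e^{it\xi^3}$, producing space--time shifts depending on the modulation parameter and a shear of the solution's support, which makes the bookkeeping for orthogonality genuinely more delicate. This ingredient is supplied by Theorem \ref{thm:profile L8L8}; granted it, the concentration-to-a-single-profile argument above is essentially formal and simultaneously yields existence of an extremiser together with strong $L^2$-convergence of the maximising sequence modulo symmetries.
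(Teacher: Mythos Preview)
Your proof is correct and takes essentially the same route as the paper: apply the profile decomposition of Theorem~\ref{thm:profile L8L8}, use the $L^2$ and $L^8$ asymptotic orthogonalities \eqref{eq:orthogonal1}--\eqref{eq:orthogonal2}, and finish with a convexity step forcing a single profile. The paper argues via the equality case of $\bigl(\sum_j a_j^4\bigr)^{1/4}=\sum_j a_j$ with $a_j=\|\phi^j\|_2^2$, whereas you use $\sum_j a_j^4\le(\sup_j a_j)^3\sum_j a_j$; the two are interchangeable here.

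One factual correction to your opening paragraph: Fourier modulations $f\mapsto e^{ix\xi_0}f$ do \emph{not} leave $\|e^{-t\partial_x^3}f\|_{L^8_{t,x}}$ invariant. The Airy flow is not boost-invariant; after modulating by $e^{ixN}$ and shearing in $(t,x)$ one picks up an irreducible Schr\"odinger phase $3itNk^2$, and the $L^8$ norm actually decays like $N^{-1/8}$ (see \eqref{eq-10}--\eqref{eq-12}). This is exactly why Theorem~\ref{thm:profile L8L8} carries no frequency parameters $\xi_n^j$ while Theorem~\ref{thm:Airy-prof} does: highly oscillatory profiles are absorbed into the remainder because their $L^8$ norm vanishes, not because they are symmetry images. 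Your formal argument survives since you invoke Theorem~\ref{thm:profile L8L8} as stated (its operators $e^{t_n^j\partial_x^3}g_n^j$ involve only time--space translation and scaling, which \emph{are} symmetries), but the mechanism behind that theorem is the opposite of what you describe.
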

This theorem is proven in Section \ref{sec:existence}. The proof makes use of the linear profile decomposition for the Airy evolution operator $e^{-t\partial_x^3}$ acting on a bounded sequence of $\{f_n\}\in L^2$, which we develop in Section \ref{sec:profile} based on the previous result in \cite{Shao:2008:linear-profile-Airy-Maximizer-Airy-Strichartz}. In \cite{Shao:2008:maximizers-Strichartz-Sobolev-Strichartz}, the profile decomposition for the Schr\"odinger equation developed in \cite{Begout-Vargas:2007:profile-schrod-higher-d} was used to prove the existence of extremisers to the Strichartz inequality for the Schr\"odinger equation in higher dimensions. The profile decomposition can be viewed as a manifestation of the idea of ``concentration-compactness",  see P.-L. Lions \cite{Lions-1984-cc-locally-compact-I, Lions-1984-cc-locally-compact-II, Lions-1985-cc-limit-case-I, Lions-1985-cc-limit-case-II}.

\begin{remark} Theorem \ref{thm-existence} is different from that in \cite{Shao:2008:linear-profile-Airy-Maximizer-Airy-Strichartz} where a dichotomy result is obtained on the existence of extremisers to the Strichartz inequality $\|e^{-t\partial_x^3}D^{1/6}f\|_{L^6_{t,x}}\le C \|f\|_{L^2}$, which is the symmetric ``endpoint" Strichartz inequality; in other words, for this Strichartz inequality, either an extremiser exists or a sequence of modulated Gaussians approximates to the extremiser. The dichotomy is due to the presence of highly oscillatory terms in the refined profile decomposition, see Theorem \ref{thm:Airy-prof}. Another instance of a dichotomy result on extremisers to a Strichartz-type inequality is in \cite{Jiang-Pausader-Shao:2010:4th-NLS}. The presence of highly oscillatory terms in the profile decomposition is not a problem for the existence of extremisers if the equation is invariant under boosts, i.e., shifts in momentum (or Fourier) space, which is the case for the Schr\"odinger and wave equations. The Airy equation \eqref{eq:airy} is, however, \emph{not invariant} under shifts in momentum space. Hence to get the existence of maximizers for \eqref{eq:airy-strichartz} we need a profile decomposition which avoids highly oscillatory terms, which is done in Theorem \ref{thm:profile L8L8}.
\end{remark}

Extremisers to the Strichartz inequality for the Schr\"odinger equation and the wave equation have been studied intensively recently. For the Strichartz inequality for the Schr\"odinger equation, Kunze \cite{Kunze:2003:maxi-strichartz-1d} proved the existence of extremisers to the one dimensional Strichartz inequality by establishing that any nonnegative extremizing sequence converges strongly an extremiser in $L^2$ up to the natural symmetries of the inequality. In the lower dimensional case, the existence of extremisers was shown by Foschi \cite{Foschi:2007:maxi-strichartz-2d} and Hundertmark, Zharnitsky \cite{Hundertmark-Zharnitsky:2006:maximizers-Strichartz-low-dimensions}: Gaussians are extremisers, which are unique up to the natural symmetries of the inequality. Later works devoted to the study of the Strichartz inequality for the Schr\"odinger equation with different emphases include \cite{Bennett-Bez-Carbery-Hundertmark:2008:heat-flow-of-strichartz-norm, Carneiro-2009-sharp-strichartz, Christ-Quilodran:Gaussians-rarely-extremize-non-L2-restriction-paraboloids}. To the best of our knowledge, we remark that all the previous known methods do not seem to be adapted directly to finding the explicit form of ``extremisers" to \eqref{eq:airy-strichartz} in our setting. For extremisers to the Strichartz inequality for the wave equation, see \cite{Foschi:2007:maxi-strichartz-2d, Bulut:2009:maximizer-wave}.

Closely related to the Strichartz inequality for the Schr\"odinger equations, Christ and Shao  \cite{Christ-Shao:extremal-for-sphere-restriction-I-existence, Christ-Shao:extremal-for-sphere-restriction-II-characterizations} studied ``extremisers" to an adjoint Fourier restriction inequality for the sphere, namely the Tomas-Stein inequality $L^2(S^2)\to L^4_{x}(\mathbb{R}^3)$ for two dimensional sphere $S^2$. Although the Strichartz inequality for the Schr\"odinger equation can be viewed as an adjoint Fourier restriction inequality for the paraboloid, the situation for the sphere is different from the paraboloid case due to the nonlocal property and the lack of scaling symmetry of the adjoint Fourier restriction operator: $L^2(S^2)\to L^4_{x}(\mathbb{R}^3)$. However, among other things, they were able to show that there exists an extremal by proving that any extremising sequence of nonnegative functions in $L^2(S^2)$ has a strongly convergent subsequence.
 %the main ingredient is to rule out the obstacle that, for the extremising sequence $\{f_\nu\}$, $f_\nu^2$ may converge to a pair of Dirac masses on the sphere.
 For existence of quasiextremals and extremisers to the convolution inequality with the surface measure on the paraboloid or the sphere, see \cite{Christ:quasiextremiser-radon-like-transform, Christ:extremiser-radon-like-transform, Stovall:quasiextremals-to-convolution-sphere}.

Next we turn to the characterization of the extremisers to \eqref{eq:airy-strichartz} from studying the corresponding generalized Euler-Lagrange equation:
\begin{equation}\label{eq:euler-lagrange}
\omega f= \int e^{t\partial_x^3}\bigl[|e^{-t\partial_x^3}f|^6e^{-t\partial_x^3}f\bigr]dt,
\end{equation}
where $\omega$ is a Lagrange multiplier, which for extremisers $f$ is given by $\omega= \mathcal{A}^8 \|f\|^6_2$ where $\mathcal{A}$ is the optimal constant defined in \eqref{eq:best-constant}. The Euler-Lagrange equation \eqref{eq:euler-lagrange} can be established by a standard variational argument. Traditionally, once the existence of an extemiser has been shown its properties are deduced from studying the associated Euler-Lagrange equation. Note that in our case \eqref{eq:euler-lagrange} is a highly non-linear and non-local equation, which makes this a rather non-trivial task. Nevertheless the following strong regularity result for extremisers holds.
\begin{theorem}\label{thm-exp-decay}
For any extremiser $f$ to the Airy Strichartz inequality \eqref{eq:airy-strichartz} there exists $\mu_0>0$ such that
 \begin{equation}\label{eq-5}
  k\mapsto e^{\mu_0|k|^3} \widehat{f}(k)\in L^2;
 \end{equation}
where $\hat{f}$ is the Fourier transform of $f$. In particular, $f$ can be extended to be an entire function on the complex plane.
\end{theorem}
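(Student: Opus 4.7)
The plan is to derive \eqref{eq-5} from the Euler-Lagrange equation \eqref{eq:euler-lagrange} by a two-step bootstrap, first propagating polynomial decay of $\widehat{f}$ to all orders and then upgrading this to exponential decay in $|k|^3$. The starting point is the standard variational argument: differentiating the ratio $\|e^{-t\partial_x^3}(f+\eps g)\|_{L^8_{t,x}}^8/\|f+\eps g\|_2^8$ at $\eps=0$ and requiring vanishing for every test direction $g$ shows that any extremiser satisfies \eqref{eq:euler-lagrange} with $\omega=\mathcal A^8\|f\|_2^6$. Taking the Fourier transform in $x$, the right-hand side becomes a septilinear expression in $\widehat f$ supported on the resonant hypersurface where $\sum_{j=1}^7\eps_j k_j=k$ and $\sum_{j=1}^7\eps_j k_j^3=k^3$, with the signs $\eps_j\in\{\pm 1\}$ recording the pattern of $u$ and $\overline u$ in $|u|^6 u$. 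All subsequent estimates are carried out on this septilinear form.

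For Step 1, the goal is polynomial decay of $\widehat f$ to all orders. I would dyadically decompose $f=\sum_N P_N f$ with $P_N$ a Littlewood-Paley projector at frequency $\sim N$, and insert this into \eqref{eq:euler-lagrange}. For a fixed output frequency $N$, momentum conservation forces $\max_j N_j\gtrsim N$, and by pigeonhole at least two input factors must have frequencies separated by a large ratio. The refined bilinear Airy Strichartz estimate, whose proof underlies the profile decomposition in Theorem \ref{thm:Airy-prof}, produces a gain of a small power of $N_{\min}/N_{\max}$ on such a frequency-separated pair, while the remaining five factors are controlled by the scalar Strichartz \eqref{eq:strichartz} in $L^8_{t,x}$. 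The outcome is a Schur-type inequality, schematically $\|P_N f\|_2\lesssim\|f\|_2^6\sum_{N'\gtrsim N}(N/N')^\alpha\|P_{N'}f\|_2$ with $\alpha>0$, whose iteration yields $\||k|^s\widehat f\|_{L^2}<\infty$ for every $s\ge 0$.

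For Step 2, I would upgrade polynomial decay of all orders to the exponential bound \eqref{eq-5}. The weight $e^{\mu|k|^3}$ is tuned to the Airy dispersion in two ways. First, it commutes with the Airy group, so the weighted Strichartz $\|e^{\mu|D|^3}e^{-t\partial_x^3}g\|_{L^8_{t,x}}\le\mathcal A\|e^{\mu|D|^3}g\|_2$ holds automatically. Second, the resonance constraint $\sum_j\eps_j k_j^3=k^3$ forces the convexity inequality $|k|^3\le\sum_j|k_j|^3$ on the septilinear support, so $e^{\mu|k|^3}\le\prod_{j=1}^7 e^{\mu|k_j|^3}$; the output weight thus distributes across the seven input factors. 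Working with the truncated weight $w_{\mu,M}(k):=\min(e^{\mu|k|^3},e^{\mu M^3})$ to legitimize manipulations, and combining the weighted Strichartz with the bilinear gain from Step 1, one obtains a bootstrap bound $\|w_{\mu,M}\widehat f\|_2\le A(\mu)$ uniformly in $M$, on a maximal interval $[0,\mu_0]$ with $\mu_0>0$. Letting $M\to\infty$ produces \eqref{eq-5}. The fact that $f$ extends to an entire function is then immediate: for $z=a+ib\in\C$ one has $|f(z)|\le(2\pi)^{-1/2}\int e^{|b||k|-\mu_0|k|^3}|e^{\mu_0|k|^3}\widehat f(k)|\,dk$, which is finite by Cauchy-Schwarz.

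The main obstacle is closing the bootstrap in Step 2. Convexity distributes the output weight $e^{\mu|k|^3}$ across the inputs as $\prod e^{\mu|k_j|^3}$, but naively this simply reproduces the septilinear form applied to $e^{\mu|D|^3}f$, which merely asserts that $e^{\mu|D|^3}f$ satisfies an analogue of the extremiser identity and gives no improvement. The actual gain must come from combining the slack in the convexity estimate once the resonance and momentum constraints are taken into account with the bilinear refinement of Step 1 applied to one frequency-separated pair, which supplies the small multiplicative factor needed to iterate. Quantifying this gain precisely, and ensuring it remains positive as $\mu$ increases from $0$, is the technical core of the argument and pins down the maximal admissible $\mu_0$.
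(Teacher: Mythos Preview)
Your framework has the right ingredients---the Euler--Lagrange equation, the subadditivity $|k|^3\le\sum_j|k_j|^3$ on the resonant set, the bilinear Airy Strichartz estimate, and a regularized weight together with a continuity argument---but the mechanism you propose for producing the needed smallness is not correct in either step, and Step~1 is in fact unnecessary. In Step~1 your pigeonhole claim is false: with output frequency $\sim N$, all seven input frequencies may sit at $\sim N$ simultaneously, so no frequency-separated pair among the inputs is forced, and the Schur-type bound you wrote does not follow. In Step~2 you locate the gain in ``slack in the convexity estimate''; the paper exploits no such slack and uses subadditivity only as the crude pointwise bound $M_{F_{\mu,\eps}}\le M$.

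The missing structural idea is the following. One tests the Euler--Lagrange identity against $g=e^{2F_{\mu,\eps}(P)}f_>$, where $\widehat f_>:=\widehat f\,1_{|k|>s^2}$ and $F_{\mu,\eps}(k)=\mu|k|^3/(1+\eps|k|^3)$; distributing the weight via subadditivity gives $\omega\|h_>\|_2^2\lesssim M(h_>,h,\dots,h)$ with $h:=e^{F_{\mu,\eps}}\widehat f$. Now split each of the seven remaining copies of $h$ into $h_<+h_>$ (frequencies $\le s^2$ versus $>s^2$). Terms with two or more factors $h_>$ contribute the harmless nonlinear piece $C\sum_{l=2}^7\|h_>\|_2^l$. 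The dangerous linear term has all seven inputs equal to $h_<$; here one splits \emph{one} of them further as $h_\ll+h_\sim$ (frequencies $\le s$ versus in $[s,s^2]$). The pair $(h_>,h_\ll)$ is separated by a ratio $\ge s$, so the bilinear estimate supplies a factor $s^{-1/4}$; the $h_\sim$ piece contributes $\|\widehat f\,1_{s\le|k|\le s^2}\|_2=o(1)$ as $s\to\infty$, simply because $f\in L^2$. Choosing $\mu=s^{-6}$ keeps every factor $e^{\mu s^6}$ bounded, and one arrives at $\omega\|h_>\|_2\le o(1)\,\|h_>\|_2+C\sum_{l=2}^7\|h_>\|_2^l+o(1)$ uniformly in $\eps>0$. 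A continuity argument in $\eps$ (starting at $\eps=1$, where the weight is bounded and $\|h_>\|_2$ is small) then traps $\|h_>\|_2$ below a fixed threshold for all $\eps>0$, and monotone convergence as $\eps\to0$ yields \eqref{eq-5}. Thus the frequency separation one actually uses is between the high-frequency \emph{test} function and the very-low-frequency part of a single input; the intermediate frequencies are handled by the $L^2$ tail of $f$, not by any bilinear gain or convexity slack. Your truncation $w_{\mu,M}$ could in principle replace the $\eps$-regularization, but only once this three-scale splitting and the coupling $\mu=s^{-6}$ are in place.
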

The proof of this theorem is based on a bootstrap argument, which relies on a refined bilinear Strichartz inequality for Airy operator $e^{-t\partial_x^3}f$, and a weighted Strichartz inequality. The argument uses some ideas similar to Erdogan, Hundertmark and Lee \cite{Erdogan-Hundertmark-Lee:2010:exponential-decay-dispersion-management-solitons}, which in turn is based in part on \cite{Hundertmark-Lee:2009:decay-smoothness-for-solutions-dispersion-managed-NLS}.
In \cite{Erdogan-Hundertmark-Lee:2010:exponential-decay-dispersion-management-solitons}, it is shown that solutions to the dispersion managed non-linear Schr\"odinger equation in the case of zero residual dispersion are exponentially fast decaying not only in the Fourier space but also in the spatial space. The fact that \cite{Erdogan-Hundertmark-Lee:2010:exponential-decay-dispersion-management-solitons} also establishes decay in the spatial space is essentially due to the fact that the linear Schr\"odinger operator $e^{it\Delta}$ involved enjoys an identity
\begin{equation}\label{eq-11}
e^{it\Delta}f(x)=(2\pi)^{-d/2} \int_{\mathbb{R}^d} e^{ix\xi+it|\xi|^2}\hat{f}(\xi)d\xi=Ct^{-d/2} \int_{\mathbb{R}^d} e^{\frac {|x-y|^2}{4it}}f(y)dy, \text{ for some }C>0,
\end{equation}
which enables one to obtain the decay in the spatial space from that on the Fourier side. There is no such identity for the Airy operator and thus our Theorem \ref{thm-exp-decay} gives decay only in Fourier space. On the other hand, the decay given by Theorem \ref{thm-exp-decay} is much more rapid than even Gaussian decay.

The organization of the paper is as follows. In Section \ref{sec:profile}, we establish the linear profile decomposition. In Section \ref{sec:existence}, we show the existence of extremisers to the Airy Strichartz inequality $L^2\to L^8_{t,x}$. In Section \ref{sec:analyticity}, we show that any solution to the generalized Euler-Lagrange equation, which includes the extremiser as a special case, obeys a bound of the form \eqref{eq-5} and can be extended to be analytic on the complex plane. It is proven by assuming an important bootstrap lemma, which we establish in Section \ref{sec:bootstrap}.

\section{The linear profile decomposition}\label{sec:profile}
Recall from the introduction, we will use the linear profile decomposition for the Airy evolution operator $e^{-t\partial_x^3}$ for $L^2$ initial data to prove the existence of extremisers for \eqref{eq:airy-strichartz}.  Roughly speaking, the linear profile decomposition is to investigate the general structure of solutions $\{e^{-t\partial_x^3}f_n\}$ for bounded $\{f_n\}\in L^2$, and aims to compensate for the loss of compactness of solution operator caused by the symmetries of the equation, \cite{Lions-1984-cc-locally-compact-I}. For a sequence $\{e^{-t\partial_x^3}f_n\}$, it is expected to be written as a superposition of concentrating waves, ``profiles" plus an negligible reminder term; the interaction of the profiles is small, see the precise statements in Theorem \ref{thm:Airy-prof} and Theorem \ref{thm:profile L8L8}. The profile decomposition for the nonlinear wave and Schr\"odinger equation, and the gKdV equation have been developed in \cite{Bahouri-Gerard:1999:profile-wave, Begout-Vargas:2007:profile-schrod-higher-d, Carles-Keraani:2007:profile-schrod-1d, Keraani:2001:profile-schrod-H^1, Merle-Vega:1998:profile-schrod, Shao:2008:linear-profile-Airy-Maximizer-Airy-Strichartz}. To prepare for the linear profile decomposition theorem for the Airy evolution operator in the Strichartz norm $\|u\|_{L^8_{t,x}}$ needed in this paper, we recall two definitions from \cite{Shao:2008:linear-profile-Airy-Maximizer-Airy-Strichartz}.
\begin{definition}For any phase $\theta\in \mathbb{R}/2\pi\Z$, position $x_0\in \mathbb{R}$ and scaling parameter $h_0>0$, we define the unitary transform $g_{\theta,x_0,h_0}:L^2\to L^2$ by the formula
$$[g_{\theta,x_0,h_0}f](x):=\frac {1}{h_0^{1/2}}e^{i\theta}f(\frac {x-x_0}{h_0}).$$
\end{definition}We let $G$ be the collection of such transformations. It is easy to see that $G$ is a group which preserves the $L^2$ norm.

\begin{definition}\label{def-ortho}For $j\neq k$, two sequences $\Gamma_n^j:=(h_n^j,\xi_n^j, x_n^j,t_n^j)_{n\ge 1}$ and $\Gamma_n^k:=(h_n^k,\xi_n^k, x_n^k,t_n^k)_{n\ge 1}$ in $(0,\infty)\times \mathbb{R}^3$ are orthogonal if there holds,
\begin{align}
&\text{either } \limsup_{n\to \infty}\left(\dfrac {h_n^j}{h_n^k}+\dfrac {h_n^k}{h_n^j}+h_n^j|\xi_n^j-\xi_n^k|\right)=\infty,\\
&\text{or }(h_n^j,\xi_n^j)=(h_n^k, \xi_n^k)\text{ and }\\
&\quad \limsup_{n\to \infty}\left(\dfrac{|t_n^k-t_n^j|}{(h_n^j)^3}+\dfrac {3|(t_n^k-
t_n^j)\xi_n^j|}{(h_n^j)^2}+\dfrac{|x_n^j-x_n^k+3(t_n^j-t_n^k)
(\xi_n^j)^2|}{h_n^j}\right)=\infty \nonumber.
\end{align}
\end{definition}
Let $D^{\alpha}$, $\alpha\in \mathbb{R}$, be the fractional derivative operator defined in terms of the Fourier multiplier, $\widehat{D^\alpha f}=|\xi|^{\alpha}\hat{f}$. We state the following linear profile decomposition in the Strichartz norm $\|D^{1/6}\cdot\|_{L^6_{t,x}}$ from \cite{Shao:2008:linear-profile-Airy-Maximizer-Airy-Strichartz}.
\begin{theorem}\label{thm:Airy-prof}
Let $(f_n)_{n\ge 1}$, $f_n:\mathbb{R}\to \mathbb{C}$, be a sequence of functions satisfying $\|f_n\|_{L^2_{t,x}}\le 1$.  Then up to a subsequence, there exists a sequence of $L^2$ functions $(\phi^j)_{j\ge 1}: \mathbb{R}\to \mathbb{C}$ and a family of pairwise orthogonal sequences
$\Gamma_n^j=(h_n^j,\xi_n^j,x_n^j,t_n^j)\in (0,\infty)\times \mathbb{R}^3$ such that, for any $l\ge 1$,
there exists an $L^2$ function $w_n^l: \mathbb{R}\to \mathbb{C}$ satisfying
\begin{equation}\label{eq:prof}
f_n=\sum_{1\le j\le l, \xi_n^j\equiv 0 \atop \text{ or } |h_n^j\xi_n^j|\to \infty} e^{t_n^j\partial_x^3}g_n^j[e^{i(\cdot)h_n^j\xi_n^j}\phi^{j}]+w_n^l,
\end{equation}
where $g_n^j:=g_{0,x_n^j,h_n^j}\in G$ and \begin{equation}\label{eq:err}
 \limsup_{l\to \infty}\limsup_{n\to \infty} \|D^{1/6}e^{-t\partial_x^3}w_n^l\|_{L^6_{t,x}}=0.
\end{equation}
Moreover, for every $l\ge 1$, \begin{equation}\label{eq:almost-ortho}
 \limsup_{n\to \infty} \left|\|f_n\|^2_2-\left(\sum_{j=1}^l\|\phi^j\|^2_2
 +\|w_n^l\|^2_2\right)\right|=0.
\end{equation}
\end{theorem}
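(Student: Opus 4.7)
The plan is to execute a standard concentration-compactness extraction driven by a \emph{refined} Strichartz inequality. First I would establish (or import from \cite{Shao:2008:linear-profile-Airy-Maximizer-Airy-Strichartz}) an estimate of the form
$$
\|D^{1/6}e^{-t\partial_x^3}f\|_{L^6_{t,x}} \lesssim \|f\|_{L^2}^{1-\theta}\,\Bigl(\sup_Q |Q|^{-\alpha}\|P_Q f\|_{L^2}\Bigr)^{\theta},
$$
where $Q$ ranges over dyadic frequency intervals, $P_Q$ denotes the associated Littlewood--Paley projector, and $\theta\in(0,1)$ and $\alpha>0$ are determined by scaling. Such a refinement follows from duality in $L^{6/5}_{t,x}$, a Whitney decomposition of the product set $\{(\xi,\eta)\in\mathbb{R}^2:\xi\neq \eta\}$ into pairs of comparable dyadic cubes, and a bilinear Airy restriction estimate exploiting the transversality gained from the curvature of the cubic curve $(\xi,\xi^3)$.

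Given this refined estimate, if $\limsup_n\|D^{1/6}e^{-t\partial_x^3}f_n\|_{L^6_{t,x}}>0$, then for each $n$ there is a frequency interval of scale $(h_n^1)^{-1}$ centred at $\xi_n^1$ whose Littlewood--Paley piece carries a positive proportion of $\|f_n\|_{L^2}$. Applying $(g_n^1)^{-1}$, the modulation $e^{-i h_n^1 \xi_n^1\,\cdot}$, and the time translation $e^{-t_n^1\partial_x^3}$ then produces a sequence bounded in $L^2$ whose weak subsequential limit $\phi^1$ is the first profile. Two regimes must be separated: when $h_n^1\xi_n^1$ remains bounded, one resets $\xi_n^1\equiv 0$ and absorbs the frequency shift into the rescaling and spatial translation parameters; otherwise $|h_n^1\xi_n^1|\to\infty$ and the modulation must be kept as an independent datum. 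This explains the dichotomy recorded in the index set of \eqref{eq:prof}.

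Setting $w_n^1 := f_n - e^{t_n^1\partial_x^3}g_n^1[e^{i(\cdot)h_n^1\xi_n^1}\phi^1]$, I would verify the Pythagorean identity $\|f_n\|_2^2 = \|\phi^1\|_2^2 + \|w_n^1\|_2^2 + o(1)$ using weak convergence and the unitarity of $g_n^1$, $e^{t\partial_x^3}$ and the modulation operator. Iterating this extraction on the remainder produces the full sequence of profiles. Pairwise orthogonality of the parameter strings $\Gamma_n^j$ in the sense of Definition \ref{def-ortho} is enforced along the way: whenever two successive extractions produced non-orthogonal parameters, they can be merged into a single profile after passing to a further subsequence and adjusting by a fixed element of $G$. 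The error bound \eqref{eq:err} is recovered by applying the refined Strichartz inequality to $w_n^l$: only dyadic pieces matching a not-yet-extracted profile can contribute, so the right-hand supremum tends to zero as $l\to\infty$, since $\sum_j\|\phi^j\|_2^2\le\limsup_n\|f_n\|_2^2<\infty$ by \eqref{eq:almost-ortho}.

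The main obstacle is the refined Strichartz inequality itself: the presence of the fractional derivative $D^{1/6}$ combined with the anisotropic scaling of the cubic dispersion $\xi\mapsto\xi^3$ requires a bilinear Airy estimate that is sharp enough so that the extracted frequency--scale data $(h_n^1,\xi_n^1)$ match precisely the symmetry group action used to reassemble \eqref{eq:prof}. Once that refinement is in place, the weak-limit extraction, the Pythagorean splitting, and the diagonal iteration are all standard.
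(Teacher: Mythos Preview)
The paper does not supply its own proof of this theorem: it is quoted verbatim from \cite{Shao:2008:linear-profile-Airy-Maximizer-Airy-Strichartz} and used as a black box to derive Theorem~\ref{thm:profile L8L8}. So there is no in-paper argument to compare against.

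That said, your sketch is the correct blueprint and matches the strategy of the cited reference. The proof in \cite{Shao:2008:linear-profile-Airy-Maximizer-Airy-Strichartz} indeed hinges on a refined Airy--Strichartz estimate (obtained via bilinear restriction for the cubic curve and a Whitney-type decomposition), followed by the usual weak-limit extraction, Pythagorean splitting, orthogonality bookkeeping, and diagonal iteration that you outline. Your identification of the dichotomy $\xi_n^j\equiv 0$ versus $|h_n^j\xi_n^j|\to\infty$ as arising from whether the extracted frequency centre can be absorbed into the other parameters is exactly right, and is the Airy-specific wrinkle compared with the Schr\"odinger case. The one point you flag as the ``main obstacle''---getting the refined inequality sharp enough that the frequency--scale parameters it produces match the group $G$---is precisely where the work in \cite{Shao:2008:linear-profile-Airy-Maximizer-Airy-Strichartz} lies; your proposal correctly locates the difficulty without resolving it, which is appropriate for a sketch that explicitly imports that step.
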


As a consequence of this theorem, we can develop a linear profile decomposition in the Airy-Strichartz norm $\|\cdot\|_{L^8_{t,x}}$, where the highly oscillatory terms $e^{ixh_n^j\xi_n^j}\phi^j(x)$ with $|h_n^j\xi_n^j|\to\infty$ disappear.
\begin{theorem}\label{thm:profile L8L8}
Let $(f_n)_{n\ge 1}$, $f_n:\mathbb{R}\to \mathbb{C}$, be a sequence of functions satisfying $\|f_n\|_2\le 1$.  Then up to a subsequence, there exists a sequence of $L^2$ functions $(\phi^j)_{j\ge 1}: \mathbb{R}\to \mathbb{C}$ and a family of parameters $\Gamma_n^j=(h_n^j,x_n^j,t_n^j)\in (0,\infty)\times \mathbb{R}^2$ such that, for any $l\ge 1$,
there exists an $L^2$ function $w_n^l: \mathbb{R}\to \mathbb{C}$ satisfying
\begin{equation*}
f_n=\sum_{1\le j\le l} e^{t_n^j\partial_x^3}g_n^j(\phi^j)+w_n^l,
\end{equation*}
where $g_n^j:=g_{0,x_n^j,h_n^j}\in G$ and
\begin{equation}\label{eq:error}
 \limsup_{l\to \infty}\limsup_{n\to \infty} \|e^{-t\partial_x^3}w_n^l\|_{L^8_{t,x}}=0,
\end{equation}
and for $j\neq k$,
\begin{equation}\label{eq:ortho-parameter}
\limsup_{n\to\infty} \left( \frac {h_n^j}{h_n^k}+\frac {h_n^k}{h_n^j} +\frac {|t_n^j-t_n^k|}{(h_n^j)^3}+\frac {|x_n^j-x_n^k|}{h_n^j}\right)=\infty.
\end{equation}
Moreover, we have two orthogonality results: for every $l\ge 1$,
\begin{align}
 &\limsup_{n\to \infty} \left|\|f_n\|^2_2-\left(\sum_{j=1}^l\|\phi^j\|^2_2
 +\|w_n^l\|^2_2\right)\right|=0. \label{eq:orthogonal1}\\
 &\limsup_{n\to\infty}\left| \|\sum_{1\le j\le l} e^{-(t-t_n^j)\partial_x^3}g_n^j(\phi^j)\|^8_{L^8_{t,x}}-
 \sum_{1\le j\le l} \|e^{-t\partial_x^3}\phi^j\|^8_{L^8_{t,x}}\right|=0. \label{eq:orthogonal2}
\end{align}
\end{theorem}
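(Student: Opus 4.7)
The plan is to upgrade Theorem \ref{thm:Airy-prof} by absorbing the highly oscillatory profiles (those with $|h_n^j\xi_n^j|\to\infty$) into the remainder and showing that the new remainder vanishes in the $L^8_{t,x}$ Strichartz norm. Concretely, apply Theorem \ref{thm:Airy-prof} to $\{f_n\}$, re-index the non-oscillatory profiles (those with $\xi_n^j\equiv 0$) as the new family $\{\phi^j\}_j$, and set
$$\tilde w_n^l := w_n^l + \sum_{1\le j\le l,\ |h_n^j\xi_n^j|\to\infty} e^{t_n^j\partial_x^3}g_n^j\bigl[e^{i(\cdot)h_n^j\xi_n^j}\phi^j\bigr].$$
The core task is then to prove $\limsup_{l\to\infty}\limsup_{n\to\infty}\|e^{-t\partial_x^3}\tilde w_n^l\|_{L^8_{t,x}}=0$, which I split into two pieces.

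First, I show each oscillatory profile has $L^8_{t,x}$ norm going to zero. By translation invariance of $L^8_{t,x}$ in $(t,x)$, one may take $x_n^j=t_n^j=0$; from the Fourier representation \eqref{eq-3}, the substitution $k=\xi_n^j+\eta/h_n^j$ followed by the rescaling $t=(h_n^j)^3 s$, $x=h_n^j y-3(h_n^j)^3 s(\xi_n^j)^2$ reduces the profile's $L^8_{t,x}$ norm to the $L^8_{y,s}$ norm of
$$V_n(y,s)=\int e^{iy\eta+3i\mu_n^j s\eta^2+is\eta^3}\widehat{\phi^j}(\eta)\,d\eta,\qquad \mu_n^j:=h_n^j\xi_n^j\to\infty.$$
A further rescaling $s\mapsto s/(3\mu_n^j)$ extracts a Jacobian $(3\mu_n^j)^{-1/8}$ in front of an integrand that converges pointwise to the free 1D Schr\"odinger evolution $e^{is\partial_x^2}\phi^j$; for Schwartz $\phi^j$ the latter lies in $L^8_{s,y}$ by the dispersive decay $|e^{is\partial_x^2}\phi^j|\lesssim |s|^{-1/2}\|\phi^j\|_{L^1}$ plus smoothness near $s=0$, and density of Schwartz functions in $L^2$ combined with dominated convergence finishes the piece. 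Second, for the old remainder I combine the smallness of $\|D^{1/6}e^{-t\partial_x^3}w_n^l\|_{L^6_{t,x}}$ from \eqref{eq:err} with the non-endpoint Strichartz bound $\|D^{-1/6}e^{-t\partial_x^3}w\|_{L^{12}_{t,x}}\lesssim\|w\|_2$. A pointwise-in-$t$ H\"older step $\|u\|_{L^8_x}\le\|u\|_{L^6_x}^{1/2}\|u\|_{L^{12}_x}^{1/2}$, a dyadic Littlewood-Paley decomposition, and Bernstein's inequality per frequency band yield the interpolation
$$\|e^{-t\partial_x^3}w_n^l\|_{L^8_{t,x}}\lesssim \|D^{1/6}e^{-t\partial_x^3}w_n^l\|_{L^6_{t,x}}^{1/2}\|w_n^l\|_2^{1/2},$$
whose right-hand side vanishes as $l,n\to\infty$ because $\|w_n^l\|_2\le 1$ by \eqref{eq:almost-ortho}.

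The parameter orthogonality \eqref{eq:ortho-parameter} is inherited from Definition \ref{def-ortho} by restricting to $\xi_n^j\equiv 0$, and the $L^2$-identity \eqref{eq:orthogonal1} follows from \eqref{eq:almost-ortho} since the discarded oscillatory profiles are asymptotically $L^2$-orthogonal to everything kept and simply re-assemble into $\|\tilde w_n^l\|_2^2$. For the $L^8$-orthogonality \eqref{eq:orthogonal2}, I expand $|\sum_{j\le l}u_n^j|^8$ into its multilinear monomials and show each cross term vanishes in the limit; by H\"older and the $G$-invariance of $L^8_{t,x}$ together with time-translation invariance, this reduces for every pair $j\ne k$ to showing that, after normalizing the symmetries of $u_n^j$, the partner $u_n^k$ drifts to infinity in scale, space, or time by \eqref{eq:ortho-parameter}, so that the cross integral tends to zero by dominated convergence on a dense set of Schwartz data. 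The hardest part is precisely this last step: the case analysis over which of the three parameters in \eqref{eq:ortho-parameter} diverges is combinatorially involved, and each case requires its own change of variables and careful dispersive control of the Airy propagator acting on Schwartz approximations.
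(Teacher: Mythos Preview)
Your approach is essentially the same as the paper's: apply Theorem~\ref{thm:Airy-prof}, absorb the oscillatory profiles into the remainder by showing each vanishes in $L^8_{t,x}$ via the change of variables that extracts an $N^{-1/8}$ factor and a Schr\"odinger limit controlled by dominated convergence, and deduce \eqref{eq:orthogonal2} by a case analysis on \eqref{eq:ortho-parameter}. The only minor differences are that for the old remainder the paper invokes a one-line Sobolev-type embedding $\|e^{-t\partial_x^3}u_0\|_{L^8_{t,x}}\lesssim\|D^{1/6}e^{-t\partial_x^3}u_0\|_{L^6_{t,x}}$ in place of your interpolation with the $L^{12}$ Strichartz norm, and it formalizes the cross-term vanishing as a separate lemma (Lemma~\ref{le:orthogonality-in-strichartz}) with explicit localization to space-time boxes $\Omega_n^j(R)$ rather than the dominated-convergence sketch you give.
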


\begin{remark}\label{re-4} By \eqref{eq:orthogonal1} we have
 \begin{equation*}
  \sum_{j=1}^l \|\phi^j\|_2^2
  \le
  \liminf_{n\to\infty}\big(\sum_{j=1}^l \|\phi^j\|_2^2 +\|w_n^j\|_2^2 \big)
  \le
  \liminf_{n\to\infty} \|f_n\|_2^2 \le 1
 \end{equation*}
for any $l\in\mathbb{N}$. Hence $\sum_{j=1}^\infty \|\phi^j\|_2^2\le 1$.
\end{remark}
\begin{proof} This argument consists of three steps. We first see that the error term $w_n^l$ still converges to zero in this new Strichartz norm $\|\cdot\|_{L^8_{t,x}}$. Indeed, by the Sobolev embedding,
$$\| e^{-t\partial_x^3} u_0\|_{L^8_{t,x}}\le C\|D^{1/6} e^{-t\partial_x^3} u_0\|_{L^6_{t,x}};$$
so an application of \eqref{eq:err} yields that
$$\limsup_{l\to\infty}\limsup_{n\to\infty} \| e^{-t\partial_x^3} w_n^l\|_{L^8_{t,x}}=0.$$
Secondly we claim that, for $1\le j\le l$, when $\lim_{n\to\infty}h_n^j\xi_n^j=\infty$,
 \begin{equation}\label{eq-10}
 \lim_{n\to\infty}\|e^{-(t-t_n^j)\partial_x^3}g_n^j[e^{i(\cdot)h_n^j\xi_n^j}\phi^{j}]\|_{L^8_{t,x}}=0.
 \end{equation}
It shows that the highly oscillatory terms can be reorganized into the error term. To show \eqref{eq-10}, by using the symmetries, we reduce to prove
  \begin{equation}\label{eq-12}
  \lim_{N\to\infty}\|e^{-t\partial_x^3}[e^{i(\cdot)N}\phi]\|_{L^8_{t,x}}=0.
  \end{equation}
We may assume $\phi\in \mathcal{S}$, the set of Schwartz functions, and that $\phi$ has the compact Fourier support $(-1,1)$.
$$e^{-t\partial_x^3}[e^{i(\cdot)N}\phi](x)=e^{ixN+itN^3}\int e^{i(x+3tN^2)\xi+i3Nt\xi^2+it\xi^3}\widehat{\phi}(\xi)d\xi.$$
Setting $x':=x+3tN^2$ and $t':=3Nt$, we have,
$$\lim_{N\to\infty}\|e^{-t\partial_x^3}[e^{i(\cdot)N}\phi]\|_{L^8_{t,x}}=cN^{-1/8}
\|\int e^{ix'\xi+it'\xi^2+i\frac {t'}{3N}\xi^3}\widehat{\phi} d\xi\|_{L^8_{t',x'}}$$ for some $c>0$. Then the dominated convergence theorem yields
$$\lim_{N\to\infty}\|\int e^{ix'\xi+it'\xi^2+i\frac {t'}{3N}\xi^3}\widehat{\phi} d\xi\|_{L^8_{t',x'}}=
\|e^{-it\partial_x^2} \phi^j\|_{L^8_{t,x}}.$$
 Here $e^{-it\partial_x^2}$ denotes the Schr\"odinger evolution operator defined via \begin{equation*}
e^{-it\partial_x^2}f(x):=\int e^{ix\xi+it|\xi|^2}\widehat{f}(\xi)d\xi.
\end{equation*} Indeed, $$\int e^{ix'\xi+it'\xi^2+i\frac {\xi^3}{3N}}\widehat{\phi}(\xi)d\xi\to e^{-it'\partial_x^2}\phi^j(x'),\,a.e., $$ and by using \cite[Corollary, p.334]{Stein:1993} or integration by parts, $$ \left|\int e^{ix'\xi+it'\xi^2+i\frac {t'\xi^3}{3N}}\widehat{\phi}(\xi)d\xi\right|\le C_{\phi^j} B(t',x')$$ for $n$ large enough but still uniform in $n$. Here
\begin{equation*}
 B(t',x')=\begin{cases} (1+|t'|)^{-1/2}\le C\bigl[(1+|x'|)(1+|t'|)\bigr]^{-1/4}, &\text{for } |x'|\le 6|t'|,\\
 (1+|x'|)^{-1}\le C\bigl[(1+|x'|)(1+|t'|)\bigr]^{-1/2}, &\text{for } |x'|>6|t'|. \end{cases}
 \end{equation*}
It is easy to observe that $B\in L^8_{t',x'}$. Then \eqref{eq-12} follows immediately.

Finally we claim that, for $j\neq k$,
$$\lim_{n\to\infty}\|e^{-(t-t_n^j)\partial_x^3}g_n^j(\phi^j)
e^{-(t-t_n^k)\partial_x^3}g_n^k(\phi^k)\|_{L^4_{t,x}}=0.$$
This is a consequence of the orthogonality condition \eqref{eq:ortho-parameter}, whose proof is a special case of Lemma  \ref{le:orthogonality-in-strichartz} below. The remaining conclusions in Theorem \ref{thm:profile L8L8} follow from Theorem \ref{thm:Airy-prof} accordingly.
\end{proof}

\begin{remark}\label{re-3}
A linear profile decomposition for all non-endpoint Airy Strichartz inequalities can be established by using the first two observations in the previous lemma and Lemma \ref{le:orthogonality-in-strichartz}. The statement is similar to Theorem \ref{thm:profile L8L8} and so we omit the details.
\end{remark}

\begin{lemma}\label{le:orthogonality-in-strichartz}
When $-\alpha+\frac 3q+\frac 1r=\frac 12, -1/2<\alpha<\frac 12$. Then for $j\neq k$,
\begin{equation}\label{eq:loc-1}
\lim_{n\to\infty}\|e^{-(t-t_n^j)\partial_x^3} D^\alpha g_n^j (\phi^j) e^{-(t-t_n^k)\partial_x^3}D^\alpha g_n^k (\phi^k)\|_{L^{q/2}_t L^{r/2}_x}=0
\end{equation}provided that $\{(h_n^j,x_n^j,t_n^j)\}$ and $\{(h_n^k,x_n^k,t_n^k)\}$ satisfies the orthogonality condition in \eqref{eq:ortho-parameter}.
\end{lemma}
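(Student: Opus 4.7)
The plan is to carry out a density reduction, a scaling normalization, and then a case analysis handling each mode of the orthogonality condition separately. Using the Strichartz bound \eqref{eq:strichartz} and the bilinear H\"older inequality $L^q L^r\times L^q L^r\hookrightarrow L^{q/2}L^{r/2}$, the expression on the left of \eqref{eq:loc-1} is jointly continuous in $(\phi^j,\phi^k)\in L^2\times L^2$ uniformly in $n$, so by density it suffices to treat $\phi^j,\phi^k$ Schwartz with Fourier transforms compactly supported away from $0$. Combining the identity $e^{-t\partial_x^3}g_{0,x_0,h_0}\phi(x) = h_0^{-1/2}(e^{-(t/h_0^3)\partial_y^3}\phi)((x-x_0)/h_0)$ with the change of variables $(s,y)=((t-t_n^j)/(h_n^j)^3,(x-x_n^j)/h_n^j)$ and the fact that the Strichartz scaling $-\alpha+3/q+1/r=1/2$ makes the relevant norm transform cleanly, the quantity in \eqref{eq:loc-1} becomes
$$\lambda_n^{-(\alpha+1/2)}\,\|u(s,y)\,v_n(s,y)\|_{L^{q/2}_s L^{r/2}_y},$$
where $u=e^{-s\partial_y^3}D^\alpha\phi^j$, $v=e^{-s\partial_y^3}D^\alpha\phi^k$, $v_n(s,y)=v((s+\tau_n)/\lambda_n^3,(y+\eta_n)/\lambda_n)$, with $\lambda_n=h_n^k/h_n^j$, $\tau_n=(t_n^j-t_n^k)/(h_n^j)^3$, $\eta_n=(x_n^j-x_n^k)/h_n^j$. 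The orthogonality \eqref{eq:ortho-parameter} translates to $\lambda_n+\lambda_n^{-1}+|\tau_n|+|\eta_n|\to\infty$.

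Passing to a subsequence, I split into two cases. In the scale-separation case $\lambda_n\to\infty$ (the case $\lambda_n\to 0$ follows symmetrically by instead privileging the $k$-th profile in the change of variables, which produces a prefactor $\lambda_n^{\alpha+1/2}\to 0$), the $y$-Fourier support of $v_n$ concentrates at $0$ on scale $\lambda_n^{-1}$ while that of $u$ stays fixed at scale $1$. Exploiting this wide frequency-support separation, a bilinear Airy estimate in $L^2_{s,y}$ (derived from the resonance identity $\tau=\xi_1^3+\xi_2^3$, $\xi=\xi_1+\xi_2$ together with the lower bound on the resonance Jacobian $|3\xi(2\xi_1-\xi)|$ when $|\xi_1|\gg|\xi_2|$), interpolated against the uniform $L^\infty_{s,y}$ bounds on $u$ and $v_n$ (finite since $\widehat{\phi^j},\widehat{\phi^k}\in L^1$), yields $\|uv_n\|_{L^{q/2}L^{r/2}}\lesssim \lambda_n^{\alpha+1/2-\delta}$ for some $\delta>0$; combined with the prefactor $\lambda_n^{-(\alpha+1/2)}$ this gives the desired decay. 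In the particular case $q=r=8$, $\alpha=0$ relevant to Theorem \ref{thm:profile L8L8}, this reduces concretely to an $L^4_{s,y}$ estimate via the $[L^2_{s,y},L^\infty_{s,y}]_{1/2}$ interpolation.

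In the remaining case $\lambda_n\to\lambda_\infty\in(0,\infty)$ and $|\tau_n|+|\eta_n|\to\infty$, the prefactor is bounded, so I need the product norm itself to vanish. Since $u,v\in L^q_sL^r_y$ by Strichartz, I approximate them by continuous functions $u_\varepsilon,v_\varepsilon$ compactly supported in $(s,y)$ and within $\varepsilon$ in $L^qL^r$. The bilinear H\"older inequality, together with the uniform bound $\|v_n\|_{L^qL^r}=\lambda_n^{3/q+1/r}\|v\|_{L^qL^r}=O(1)$ (since $\lambda_n$ is bounded), controls the approximation error by $O(\varepsilon)$. For the compactly supported pair, the rescaled translate $(v_\varepsilon)_n$ has support shifted by approximately $(-\tau_n,-\eta_n)$ with bounded rescaling, so for $n$ large the supports of $u_\varepsilon$ and $(v_\varepsilon)_n$ are disjoint and the product vanishes identically; letting $\varepsilon\to 0$ concludes this case. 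The main obstacle is the scale-separation case, since the naive H\"older bound $\|uv_n\|_{L^{q/2}L^{r/2}}\le\|u\|_{L^qL^r}\|v_n\|_{L^qL^r}$ yields only a prefactor $\lambda_n^0$; extracting any decay requires genuinely using the frequency-support mismatch between $u$ and $v_n$ through a bilinear refinement.
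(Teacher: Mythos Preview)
Your argument is correct, but in the scale-separation case you take a genuinely different route from the paper. The paper argues purely in physical space: for each profile it introduces the space-time region $\Omega_n^j(R)=\{(t,x):|x-x_n^j|/h_n^j+|t-t_n^j|/(h_n^j)^3\le R\}$, uses H\"older plus Strichartz plus dominated convergence to make the contribution outside $\Omega_n^j(R)$ (respectively $\Omega_n^k(R)$) smaller than any prescribed $\varepsilon$ by choosing $R$ large, and on the remaining intersection $\Omega_n^j(R)\cap\Omega_n^k(R)$ simply bounds each factor in $L^\infty_{t,x}$ and estimates the $L^{q/2}_tL^{r/2}_x$-size of the smaller box, obtaining decay $\min\{(h_n^j/h_n^k)^{1/2+\alpha},(h_n^k/h_n^j)^{1/2+\alpha}\}\to 0$. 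This is more elementary than your Fourier-side bilinear approach --- it uses no frequency localisation and no bilinear smoothing --- and works uniformly for all admissible mixed exponents $(q,r)$. Your approach, by contrast, exploits the dispersive structure more directly and foreshadows the bilinear machinery of Section~\ref{sec:analyticity}. For the translation case ($\lambda_n\to\lambda_\infty\in(0,\infty)$ with $|\tau_n|+|\eta_n|\to\infty$) your disjoint-support argument and the paper's Case~II are essentially identical.

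One technical step in your argument deserves care. Interpolating between $\|uv_n\|_{L^2_{s,y}}\lesssim\lambda_n^{1/2}$ and $\|uv_n\|_{L^\infty_{s,y}}\lesssim 1$ produces only $\|uv_n\|_{L^p_{s,y}}$ for $p\ge 2$, not a genuinely mixed norm $L^{q/2}_sL^{r/2}_y$ when $q\ne r$. This is harmless on the diagonal (in particular for $q=r=8$, where $q/2=4>2$, so your $L^4_{s,y}$ computation is complete for the case relevant to Theorem~\ref{thm:profile L8L8}), but for general $(q,r)$ you should note that after the density reduction the $y$-Fourier support of $uv_n$ lies in a fixed annulus independent of $n$ (the sumset of $\supp\widehat{\phi^j}$ and its $\lambda_n^{-1}$-dilate), so Bernstein's inequality in the spatial variable lets you pass between $L^{r/2}_y$ and $L^{q/2}_y$ at unit cost and reduce to the diagonal interpolation. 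Also, your parenthetical treatment of $\lambda_n\to 0$ should be read as ``swap $j$ and $k$ and rerun the bilinear argument with $\tilde\lambda_n=\lambda_n^{-1}\to\infty$''; the prefactor $\lambda_n^{\alpha+1/2}\to 0$ alone is not sufficient, since by scaling the norm grows like $\lambda_n^{-(\alpha+1/2)}$ under the naive H\"older bound, exactly as you observe at the end.
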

\begin{proof} We will prove \eqref{eq:loc-1} by studying \eqref{eq:ortho-parameter} case by case.

\text{\bf{Case I.}} Assume $\limsup_{n\to\infty}\frac {h_n^j}{h_n^k}+\frac {h_n^k}{h_n^j} =\infty$.
For any $R>0$, we define
\begin{align*}
\Omega_n^j(R):&=\{(t,x): \frac {|x-x_n^j|}{h_n^j}+\frac {|t-t_n^j|}{(h_n^j)^3}\le R\},\\
\Omega_n^k(R):&=\{(t,x): \frac {|x-x_n^k|}{h_n^k}+\frac {|t-t_n^k|}{(h_n^k)^3}\le R\},\\
\bigl(\Omega_n^j\bigr)^c:&=\mathbb{R}^2\setminus \Omega_n^j(R),\, \bigl(\Omega_n^k\bigr)^c:=\mathbb{R}^2\setminus \Omega_n^k(R).
\end{align*}
By using H\"older's inequality and the Strichartz inequality followed by a change of variables, we have
\begin{align*}
&\|e^{-(t-t_n^j)\partial_x^3} D^\alpha g_n^j (\phi^j) e^{-(t-t_n^k)\partial_x^3}D^\alpha g_n^k (\phi^k)\|_{L^{q/2}_t L^{r/2}_x\bigl((\Omega_n^j)^c\bigr)}\\
&\le C\|e^{-(t-t_n^j)\partial_x^3} D^\alpha g_n^j (\phi^j)\|_{L^q_tL^r_x\bigl((\Omega_n^j)^c\bigr)}\|e^{-(t-t_n^k)\partial_x^3}D^\alpha g_n^k (\phi^k)\|_{L^q_tL^r_x}\\
&\le C (h_n^j)^{-1/2-\alpha} \|e^{-\frac {t-t_n^j}{(h_n^j)^3}\partial_x^3}(D^\alpha \phi^j)(\frac {x-x_n^j}{h_n^j})\|_{L^q_tL^r_x\bigl((\Omega_n^j)^c\bigr)}\|\phi^k\|_2\\
&\le C\|\phi^k\|_2 \|e^{-t\partial_x^3} D^\alpha(\phi^j)\|_{L^q_tL^r_x(\{|x|+|t|\ge R\})}.
\end{align*} The latter integral converges to zero when $R$ goes to infinity from the dominated convergence theorem. So we can choose a  sufficiently large $R>0$ such that
 $$\|e^{-(t-t_n^j)\partial_x^3} D^\alpha g_n^j (\phi^j) e^{-(t-t_n^k)\partial_x^3}D^\alpha g_n^k (\phi^k)\|_{L^{q/2}_t L^{r/2}_x\bigl((\Omega_n^j)^c\bigr)} $$
 as small as we want. Likewise for $\|e^{-(t-t_n^j)\partial_x^3} D^\alpha g_n^j (\phi^j) e^{-(t-t_n^k)\partial_x^3}D^\alpha g_n^k (\phi^k)\|_{L^{q/2}_t L^{r/2}_x\bigl((\Omega_n^k)^c\bigr)}$. So fixing a large $R$, we may restrict our attention onto $\Omega_n^j\cap \Omega_n^k$. We aim to show that the integral on $\Omega_n^j\cap \Omega_n^k$ converges to zero when $n$ goes to infinity. Indeed, by using trivial $L^\infty_{t,x}$ bounds on $e^{-(t-t_n^j)\partial_x^3} D^\alpha g_n^j (\phi^j)$ and $e^{-(t-t_n^k)\partial_x^3}D^\alpha g_n^k (\phi^k)$, we see that
\begin{align*}
\|e^{-(t-t_n^j)\partial_x^3} D^\alpha g_n^j (\phi^j) e^{-(t-t_n^k)\partial_x^3}D^\alpha g_n^k (\phi^k)\|_{L^{q/2}_t L^{r/2}_x(\Omega_n^j\cap \Omega_n^k})\\
\le C(h_n^jh_n^k)^{-1/2-\alpha}\min\{(h_n^j)^{6/q+2/r},
(h_n^k)^{6/q+2/r}\}\\
\le C\min\{(\frac {h_n^j}{h_n^k})^{1/2+\alpha},
(\frac {h_n^k}{h_n^j})^{1/2+\alpha}\}\to 0
\end{align*} as $n$ goes to infinity. Note that $C>0$ depending on $R,\|\widehat{\phi^j}\|_{L^1}, $ and $\|\widehat{\phi^k}\|_{L^1}$. Thus \eqref{eq:loc-1} is obtained, which completes the proof of \text{\bf Case I.}

\text{\bf Case II.} Now we may assume that $h_n^j=h_n^k$ for all $n$, we are left with the case where $$\limsup_{n\to\infty}\frac {|x_n^j-x_n^k|}{h_n^j}+\frac {|t_n^j-t_n^k|}{(h_n^j)^3}=\infty.$$
We change variables $x'=\frac {x-x_n^k}{h_n^k}$ and $t'=\frac {t-t_n^k}{(h_n^k)^3}$ and see that we need to show that
$$\|e^{-(t'+\frac {t_n^k-t_n^j}{(h_n^j)^3})}(D^\alpha \phi^j)(x'+\frac {x_n^k-x_n^j}{h_n^j})e^{-t'\partial_x^3}(D^\alpha\phi^k)(x') \|_{L^{q/2}_{t'}L^{r/2}_{x'}} \to 0 $$
as $n\to\infty$. We define
\begin{align*}
\Omega^k(R):&=\{(t,x): |t'|+|x'|\le R\},\\
\Omega_n^j(R):&=\{(t,x): \left|x'+\frac {x_n^k-x_n^j}{h_n^j}\right|+\left|t'+\frac {t_n^j-t_n^k}{(h_n^j)^3}\right|\le R\}.
\end{align*} As proving \text{\bf Case I}, we may reduce to the domain $\Omega^k\cap \Omega_n^j$. While for this case, we observe that, for any fixed large $R>0$, $$|\Omega^k\cap \Omega_n^j|\to 0, \text{ as } n\to\infty.$$ This, together with the $L^\infty_{t,x}$ bounds, proves \text{\bf Case II}. Therefore the proof of Lemma \ref{le:orthogonality-in-strichartz} is complete.
\end{proof}

\begin{remark}\label{re-1}
With this lemma \ref{le:orthogonality-in-strichartz}, we have the following orthogonality result: for $(\alpha, q,r)$ defined as in Lemma \ref{le:orthogonality-in-strichartz} and $l\ge 1$,
$$\limsup_{n\to\infty}\|D^\alpha \sum_{j=1}^l e^{-(t-t_n^j)\partial_x^3}g_n^j \phi^j\|^q_{L^q_tL^r_x}\le
\sum_{j=1}^l\limsup_{n\to\infty}\|D^\alpha  e^{-(t-t_n^j)\partial_x^3}g_n^j \phi^j\|^q_{L^q_tL^r_x}$$
for $q\le r$; while for $r\le q$,
$$\limsup_{n\to\infty}\|D^\alpha \sum_{j=1}^l e^{-(t-t_n^j)\partial_x^3}g_n^j \phi^j\|^r_{L^q_tL^r_x}\le
\sum_{j=1}^l\limsup_{n\to\infty}\|D^\alpha  e^{-(t-t_n^j)\partial_x^3}g_n^j \phi^j\|^r_{L^q_tL^r_x}.$$
See \cite{Shao:2008:maximizers-Strichartz-Sobolev-Strichartz} for a similar proof.
\end{remark}

\section{Existence of extremisers}\label{sec:existence}
In this section we apply the linear profile decomposition Theorem \ref{thm:profile L8L8} to prove the existence of extremisers for \eqref{eq:airy-strichartz}.
\begin{proof}
Choose an extremising sequence $(f_n)_{n\ge 1}$ such that $$\|f_n\|_2=1, \, \lim_{n\to\infty} \|e^{-t\partial_x^3} f_n\|_{L^8_{t,x}}=\mathcal{A}.$$
By applying the linear profile decomposition in Theorem \ref{thm:profile L8L8}, we see that there is a sequence of profiles $\phi^j$ and errors $w_n^l$ such that for all $l\in\mathbb{N}$, up to a subsequence (in $n$),
\begin{equation*}
f_n=\sum_{1\le j\le l} e^{t_n^j\partial_x^3}g_n^j(\phi^j)+w_n^l .
\end{equation*}
Moreover,
\begin{align*}
  \mathcal{A}^8
 &=
  \lim_{n\to\infty} \|e^{-t\partial_x^3}f_n\|^8_{L^8_{t,x}}
  =
  \lim_{l\to\infty}\lim_{n\to\infty}
  \|\sum_{j=1}^l e^{-(t-t_n^j)\partial_x^3}g_n^j(\phi^j)\|^8_{L^8_{t,x}} \\
 &=
  \sum_{j=1}^\infty \|e^{-t\partial_x^3}\phi^j\|^{8}_{L^8_{t,x}}
  \le
  \mathcal{A}^8 \sum_{j=1}^\infty \|\phi^j\|^{2\times 4}_2
  \le
  \mathcal{A}^8 \left(\sum_{j=1}^\infty \|\phi^j\|^2_2 \right)^4
  \le \mathcal{A}^8.
\end{align*}
where the second equality follows from \eqref{eq:error}, the third equality from \eqref{eq:orthogonal2}, the first inequality from the definition of $\mathcal{A}$,
and the last inequality from $\sum_j\|\phi^j\|_2^2\le 1$, see Remark \ref{re-4}.

Thus the equal signs at the beginning and at the end force all the signs in this chain to be equal. Hence, we have
$$ 1=\bigl(\sum_{j=1}^\infty \|\phi^j\|^{2\times 4}_2\bigr)^{1/4}
\le \sum_{j=1}^\infty \|\phi^j\|^2_2 \le 1  $$
Thus
\begin{equation}\label{eq-equality}
 \bigl(\sum_{j=1}^\infty \|\phi^j\|^{2\times 4}_2\bigr)^{1/4} =
 \sum_{j=1}^\infty \|\phi^j\|^2_2
\end{equation}
which in turn implies that there is exactly one $j$ remaining. Without loss of generality, we may assume that
 $$\phi^j=0, \text{ for } j\ge 2. $$
Thus $\phi^1$ is an extremiser as desired.
\end{proof}
\begin{remark} The reason that \eqref{eq-equality} implies that at most one $\|\phi^j\|_2\neq 0$ is the strict concavity of $0\le s\mapsto s^\alpha$ for $0<\alpha<1$ (in particular, $\alpha=1/4$). More simply, if $0<\alpha<1$ then for $s_1,s_2\ge 0$ the inequality
\begin{equation}\label{eq:example}
(s_1+s_2)^\alpha \le s_1^\alpha + s_2^\alpha
\end{equation}
holds and if equality holds then either $s_1=0$ or $s_2=0$.
Indeed, one has
$$
  (s_1+s_2)^\alpha
 =
  \frac{s_1+s_2}{(s_1+s_2)^{1-\alpha}}
 =
  \frac{s_1}{(s_1+s_2)^{1-\alpha}} + \frac{s_2}{(s_1+s_2)^{1-\alpha}}
 \le
  s_1^\alpha +s_2^\alpha
$$
since $1-\alpha>0$ and the inequality is strict if both $s_1,s_2>0$.
\end{remark}
\begin{remark}\label{re-2}
Combining this argument with the orthogonality in Remark \ref{re-1}, the existence of extremisers for any non-endpoint Strichartz inequality can be obtained similarly. We omit the details here.
\end{remark}

\section{Analyticity of extremisers}\label{sec:analyticity}
In this section, we establish that any extremiser $f$ to \eqref{eq:airy-strichartz} enjoys an exponential decay in the Fourier space, Theorem \ref{thm-exp-decay}, from which the property of analyticity of extremisers follows easily. We begin with a bilinear Airy Strichartz estimate.
\begin{lemma}[Bilinear Airy estimates]\label{le:bilinear_airy}
 Suppose $\supp \widehat{f_1} \subset \{\xi: |\xi|\le N_1\}$ and $\supp \widehat{f_2} \subset \{\xi: N_2\le |\xi|\le 2N_2\}$, and $N_1\ll N_2$. Then
$$\|e^{-t\partial_x^3}f_1 e^{-t\partial_x^3} f_2\|_{L^4_{t,x}}\le C \bigl(\frac {N_1}{N_2}\bigr)^{1/4} \|f_1\|_2\|f_2\|_2.$$
where the constant $C>0$ is independent of $N_1$ and $N_2$.
\end{lemma}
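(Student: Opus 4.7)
The plan is to interpolate between a sharp bilinear $L^2_{t,x}$ estimate coming from transversality of the Airy dispersion relation at well-separated frequencies and a trivial $L^\infty_{t,x}$ bound coming from Bernstein. Write $u_j:=e^{-t\partial_x^3}f_j$. Using \eqref{eq-3} one has
\begin{equation*}
(u_1 u_2)(t,x) = (2\pi)^{-1}\iint e^{i[(k_1+k_2)x+(k_1^3+k_2^3)t]}\widehat{f_1}(k_1)\widehat{f_2}(k_2)\,dk_1\,dk_2,
\end{equation*}
so the spacetime Fourier transform of $u_1 u_2$ is the pushforward of the product measure $\widehat{f_1}(k_1)\widehat{f_2}(k_2)\,dk_1 dk_2$ under the map $\Phi:(k_1,k_2)\mapsto(\xi,\tau)=(k_1+k_2,\,k_1^3+k_2^3)$, whose Jacobian equals $3(k_2^2-k_1^2)$.

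On the supports $|k_1|\le N_1\ll N_2\le|k_2|\le 2N_2$ the factorization $k_2^2-k_1^2=(k_2-k_1)(k_2+k_1)$ shows $|J_\Phi|\sim N_2^2$, and disjointness of the two frequency regions makes $\Phi$ injective on $\supp\widehat{f_1}\times\supp\widehat{f_2}$. Applying Plancherel in $(\tau,\xi)$ and then pulling back to $(k_1,k_2)$ coordinates (which absorbs one power of $|J_\Phi|$) yields
\begin{equation*}
\|u_1 u_2\|_{L^2_{t,x}}^2 \lesssim \iint \frac{|\widehat{f_1}(k_1)|^2|\widehat{f_2}(k_2)|^2}{|J_\Phi(k_1,k_2)|}\,dk_1\,dk_2 \lesssim N_2^{-2}\|f_1\|_2^2\|f_2\|_2^2,
\end{equation*}
so $\|u_1 u_2\|_{L^2_{t,x}}\lesssim N_2^{-1}\|f_1\|_2\|f_2\|_2$. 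Separately, Cauchy--Schwarz in \eqref{eq-3} combined with compactness of $\supp\widehat{f_j}$ gives $\|u_j\|_{L^\infty_{t,x}}\lesssim N_j^{1/2}\|f_j\|_2$, whence $\|u_1 u_2\|_{L^\infty_{t,x}}\lesssim (N_1N_2)^{1/2}\|f_1\|_2\|f_2\|_2$.

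Combining these bounds through $\|u_1 u_2\|_{L^4_{t,x}}\le\|u_1 u_2\|_{L^2_{t,x}}^{1/2}\|u_1 u_2\|_{L^\infty_{t,x}}^{1/2}$ produces the prefactor $N_2^{-1/2}(N_1N_2)^{1/4}=(N_1/N_2)^{1/4}$, which is precisely the claim. The only step that genuinely needs care is the change-of-variables argument: one must check both that $\Phi$ is injective on the relevant domain and that $|J_\Phi|\gtrsim N_2^2$ uniformly there. Splitting $\widehat{f_2}$ according to the sign of $k_2$ handles the injectivity (on each half, the polynomial $t^2-\xi t+\tfrac{1}{3}(\xi^2-\tau/\xi)$ whose roots recover $k_1,k_2$ has at most one root in each of the separated annuli $[-N_1,N_1]$ and $\{N_2\le|k|\le 2N_2\}$), while the hypothesis $N_1\ll N_2$ prevents $k_2\pm k_1$ from changing sign and keeps both factors comparable to $N_2$, giving the Jacobian lower bound. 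Everything else---Plancherel, Bernstein, and $L^2$--$L^\infty$ H\"older interpolation---is routine.
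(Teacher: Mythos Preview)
Your proof is correct. The paper takes a closely related but slightly different route: instead of interpolating a bilinear $L^2_{t,x}$ estimate against a Bernstein $L^\infty_{t,x}$ bound, it applies the Hausdorff--Young inequality $\|\cdot\|_{L^4_{t,x}}\le\|\widehat{\,\cdot\,}\|_{L^{4/3}_{\tau,\xi}}$ directly to $u_1u_2$, using the same change of variables $(\xi_1,\xi_2)\mapsto(\xi_1+\xi_2,\xi_1^3+\xi_2^3)$ and Jacobian bound $|J|\sim N_2^2$, and then controls $\|\widehat{f_j}\|_{L^{4/3}}$ by H\"older against the support measure to pick up the factor $N_1^{1/4}N_2^{1/4}$. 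Both arguments rest on the identical transversality estimate $|J_\Phi|\gtrsim N_2^2$; the paper's version is marginally more economical (one inequality in place of two endpoint bounds plus interpolation), whereas your version isolates the sharp bilinear $L^2$ bound $\|u_1u_2\|_{L^2_{t,x}}\lesssim N_2^{-1}\|f_1\|_2\|f_2\|_2$ as a standalone estimate, which is of independent interest and often reusable. Your injectivity discussion is also more explicit than the paper's, which simply restricts to a single orthant without further comment.
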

\begin{proof}
We observe that
\begin{equation}\label{eq-15}
\|e^{-t\partial_x^3}f_1 e^{-t\partial_x^3} f_2\|_{L^4_{t,x}}=\|\int e^{ix(\xi_1+\xi_2)+it(\xi_1^3+\xi_2^3)}\widehat{f}_1(\xi_1)\widehat{f}_2(\xi_2)d\xi_1d\xi_2\|_{L^4_{t,x}}.
\end{equation}
We restrict the region to $\{(\xi_1, \xi_2): \xi_1,\xi_2\ge 0\}$ and change variables $a:=\xi_1+\xi_2$ and $b:=\xi_1^3+\xi_2^3$; then we see that the Jacobian $J\sim N_2^2$ since $N_1\ll N_2$. We apply the Hausdorff-Young inequality and changes of variables to see that \eqref{eq-15} is bounded by
\begin{align*}
&\lesssim \left(\iint J^{-1/3} |\widehat{f}_1\widehat{f}_2|^{4/3}d\xi_1d\xi_2\right)^{3/4}\\
& \lesssim |J|^{-1/4} \|f_1\|_2N_1^{1/4} \|f_2\|_2 N_2^{1/4} \\
&\lesssim \bigl(\frac {N_1}{N_2}\bigr)^{1/4} \|f_1\|_2\|f_2\|_2.
\end{align*}
\end{proof}

\begin{corollary}\label{cor-bilinear}
 If $\supp \widehat{f}_1 \subset \{|\xi_1|\le s\}$ and $\supp \widehat{f}_2 \subset \{|\xi_2|\ge Ls\}$ for some $s>1$ and $L\gg 1,$ then
\begin{equation}\label{eq-17}
\|e^{-t\partial_x^3}f_1 e^{-t\partial_x^3} f_2\|_{L^4_{t,x}}\le CL^{-1/4} \|f_1\|_2\|f_2\|_2.
\end{equation}
where the constant $C>0$ is independent of $L$.
\end{corollary}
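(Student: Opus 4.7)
The plan is to reduce Corollary \ref{cor-bilinear} to Lemma \ref{le:bilinear_airy} via a dyadic frequency decomposition of the high-frequency factor $f_2$. Concretely, I would write $f_2 = \sum_{k\ge 0} f_{2,k}$ where $\widehat{f_{2,k}}$ is the restriction of $\widehat{f_2}$ to the dyadic annulus $\{2^k L s \le |\xi| < 2^{k+1} L s\}$. These pieces are $L^2$-orthogonal by Plancherel, so $\sum_{k\ge 0}\|f_{2,k}\|_2^2 = \|f_2\|_2^2$.

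For each $k\ge 0$, I would apply Lemma \ref{le:bilinear_airy} to the pair $(f_1, f_{2,k})$ with $N_1 := s$ and $N_2 := 2^k L s$; the separation hypothesis $N_1 \ll N_2$ holds because $L \gg 1$, and the support conditions are exactly what the dyadic construction produces. This yields
\begin{equation*}
\|e^{-t\partial_x^3} f_1 \, e^{-t\partial_x^3} f_{2,k}\|_{L^4_{t,x}} \le C\,(2^k L)^{-1/4} \|f_1\|_2 \|f_{2,k}\|_2.
\end{equation*}
Summing via the triangle inequality in $L^4_{t,x}$ and then Cauchy--Schwarz in $k$,
\begin{align*}
\|e^{-t\partial_x^3} f_1 \, e^{-t\partial_x^3} f_2\|_{L^4_{t,x}}
&\le \sum_{k\ge 0}\|e^{-t\partial_x^3}f_1\,e^{-t\partial_x^3}f_{2,k}\|_{L^4_{t,x}} \\
&\le C L^{-1/4}\|f_1\|_2 \Big(\sum_{k\ge 0} 2^{-k/2}\Big)^{1/2} \Big(\sum_{k\ge 0}\|f_{2,k}\|_2^2\Big)^{1/2} \\
&\le C L^{-1/4} \|f_1\|_2 \|f_2\|_2,
\end{align*}
which is the desired bound, with $C$ independent of $L$.

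There is no serious obstacle, but one point worth noting is that the proof of Lemma \ref{le:bilinear_airy} was carried out only on the first quadrant $\{\xi_1,\xi_2\ge 0\}$, while the present application requires the bilinear bound for frequency supports of arbitrary sign. This is handled by splitting $\widehat{f_1}$ and $\widehat{f_2}$ into their positive- and negative-frequency parts and applying the lemma to each of the four resulting products separately. In each sign combination the Jacobian $\partial(a,b)/\partial(\xi_1,\xi_2) = 3(\xi_2^2 - \xi_1^2)$ still satisfies $|J|\sim N_2^2$ whenever $N_1 \ll N_2$, so the Hausdorff--Young argument of Lemma \ref{le:bilinear_airy} carries over on each piece, and the four contributions are then absorbed into the constant $C$.
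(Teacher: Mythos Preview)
Your proof is correct and follows essentially the same approach as the paper: dyadically decompose $f_2$ in frequency, apply Lemma~\ref{le:bilinear_airy} to each piece, then sum via the triangle inequality and Cauchy--Schwarz. The only cosmetic difference is that the paper uses the standard Littlewood--Paley projections $P_k$ (frequencies $\sim 2^k$) and sums over $k$ with $2^{k+1}\ge Ls$, whereas you index the dyadic shells starting at $Ls$; your additional remark on handling the sign quadrants is a welcome clarification that the paper leaves implicit.
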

\begin{proof}
Let $P_k$ denote the Littlewood-Paley projection operator to the frequency $\{2^k\le |\xi|\le 2^{k+1}\}$ for any $k\in \mathbb{Z}$. We dyadically decompose $f_2 =\sum_{k:\, 2^{k+1} \ge Ls }P_k f_2$. Then by the triangle inequality and Lemma \ref{le:bilinear_airy},
\begin{equation}\label{eq-55}
\begin{split}
 \|e^{-t\partial_x^3}f_1 e^{-t\partial_x^3} f_2\|_{L^4_{t,x}}
&\le
 \sum_{k:\,2^{k+1}\ge Ls} \|e^{-t\partial_x^3}f_1 e^{-t\partial_x^3}P_kf_2\|_{L^4_{t,x}}\\
&\lesssim
 \sum_{k:\, 2^{k+1} \ge Ls }
 \left(\frac {s}{2^k}\right)^{1/4} \|f_1\|_{L^2}\|P_k f_2\|_{L^2}\\
&\lesssim
 \|f_1\|_{L^2} s^{1/4} \sum_{k:\, 2^{k+1} \ge Ls } 2^{-k/4} \|P_k f_2\|_{L^2} \\
&\lesssim
 \|f_1\|_{L^2} s^{1/4}
 \left( \sum_{k:\,2^{k+1}\ge Ls} 2^{-k/2}\right)^{1/2} \left(\sum_{k}
 \|P_k f_2\|^2_{L^2}\right)^{1/2}\\
&\lesssim
 \|f_1\|_{L^2} s^{1/4} (Ls)^{-1/4} \|f_2\|_{L^2}\lesssim L^{-1/4} \|f_1\|_{L^2} \|f_2\|_{L^2}.
\end{split}
\end{equation}
This finishes the proof of Corollary \ref{cor-bilinear}.
\end{proof}

We define an $8$-linear form,
\begin{equation}\label{eq-18}
 Q(f_1,\cdots, f_8)
 :=
 \iint \Pi_{l=1}^4\overline{(e^{-t\partial_x^3}f_l)} \, \Pi_{m=5}^8(e^{-t\partial_x^3}f_m)dtdx.\end{equation}
where $f_i\in L^2,\,1\le i\le 8$. By the Airy Strichartz inequality \eqref{eq:airy-strichartz},
\begin{equation}\label{eq-9}
 |Q|\lesssim \Pi_{i=1}^8\|f_i\|^8_2.
 \end{equation}
Inspired by the Euler-Lagrange equation \eqref{eq:euler-lagrange}, we define the notion of weak solutions.
\begin{definition} A function $f\in L^2$ is said to be a weak solution to the Euler-Lagrange equation \eqref{eq:euler-lagrange} if it satisfies the following integral equation
\begin{equation}\label{eq:weak-solution-integral-form}
\omega \langle g, f\rangle =Q(g,f,\cdots, f), \quad \forall\, g\in L^2.
\end{equation} for some $\omega>0$. Here $\langle \cdot,\,\cdot\rangle $ is the inner product in $L^2$ defined by $\langle g,f\rangle =\int_\mathbb{R} \overline{g}f dx$.
\end{definition}

\begin{remark}
In view of the Euler-Lagrange equation \eqref{eq:euler-lagrange}, we see that, any extremiser $f$ to the Airy Strichartz inequality \eqref{eq:airy-strichartz} is actually a weak solution, as any solution $f$ of \eqref{eq:euler-lagrange} satisfies
\begin{equation}\label{eq-8}
\omega\langle g, f\rangle =Q(g,f,\cdots, f),\text{ with }\omega=\mathcal{A}^8\|f\|^6_2.
\end{equation}
\end{remark}
Now we list some additional notations and observations that are used in the following sections: Set
\begin{align}
 \label{eq-24} a(\eta)
&:=
 \sum_{l=1}^4\eta_l^3 -\sum_{m=5}^8\eta_m^3,\\
 \label{eq-25} b(\eta)
&:=
 \sum_{l=1}^4\eta_l- \sum_{m=5}^8\eta_m ,\\
\label{eq-26} M(h_1,\cdots, h_8)
&:=
\int_{\mathbb{R}^8} \Pi_{j=1}^8 |h_j(\eta_j)|\, \delta\bigl(a(\eta)\bigr)\delta\bigl(b(\eta)\bigr)d\eta,
\end{align}where $\delta$ denotes the Dirac mass. Then using the Fourier transform to represent $e^{-t\partial_x^3} f$ and doing the $t$ and $x$ integrals in the definition of $Q$, using $(2\pi)^{-1}\int e^{isr}dr = \delta(s)$ as distributions, we rewrite $Q$ as
\begin{equation}\label{eq-27}
Q(f_1,\cdots, f_8)=(2\pi)^{-3}\int_{\mathbb{R}^8}
 \Pi_{l=1}^4 \overline{\widehat{f_l}}(\eta_l) \, \Pi_{m=5}^8 \widehat{f_m}(\eta_m)
 \, \delta\bigl(a(\eta)\bigr)\delta\bigl(b(\eta)\bigr) d\eta.
\end{equation}
Then it is not hard to see that
\begin{align}
\label{eq-28}  Q(f_1,\cdots, f_8)& \le (2\pi)^{-3} M(|\widehat{f}_1|, \cdots, |\widehat{f}_8|),\\
\label{eq-29} M(h_1,\cdots, h_8) &= (2\pi)^3 Q(|h_1|^\vee, \cdots, |h_8|^\vee),
\end{align}
where
$ f^\vee (x):=(2\pi)^{-1/2} \int_\mathbb{R} e^{ix\xi} \widehat{f}(\xi)d\xi $
is the inverse Fourier transform.

Now we define a weighted version of $M$, for any function $F: \mathbb{R}\to \mathbb{R}$,
\begin{equation}\label{eq-33}
M_F(h_1,\cdots, h_8)
:=
\int_{\mathbb{R}^8} e^{F(\eta_1)-\sum_{l=2}^8F(\eta_l)} \Pi_{j=1}^8
|h_j(\eta_j)|\, \delta\bigl(a(\eta)\bigr)\delta\bigl(b(\eta)\bigr) d\eta.
\end{equation}
Then
\begin{equation}\label{eq-34}
M(e^F h_1, e^{-F}h_2,e^{-F}h_3,e^{-F}h_4,e^{-F}h_5,e^{-F}h_6,e^{-F}h_7,e^{-F}h_8)=M_F(h_1,\cdots, h_8).
\end{equation}
We define,  for $\mu>0, \,\eps>0$,
\begin{equation}\label{eq-14}
F_{\mu,\eps}(k):=\frac {\mu|k|^3}{1+\eps|k|^3} .
\end{equation}
\begin{proposition}\label{prop-sublinear}
For $F_{\mu,\eps}$ defined as above, we have
\begin{equation}\label{eq-35}
M_{F_{\mu,\eps}}(h_1,\cdots,h_8) \le M(h_1,\cdots, h_8)
\end{equation}
for all $\mu,\eps\ge 0$.
\end{proposition}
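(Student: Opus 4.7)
\textbf{Proof plan for Proposition \ref{prop-sublinear}.}

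The key observation is that on the support of the factor $\delta(a(\eta))$ one has $|\eta_1|^3 \le \sum_{l=2}^{8}|\eta_l|^3$: indeed, $a(\eta)=0$ gives
\begin{equation*}
 \eta_1^3 = -\eta_2^3-\eta_3^3-\eta_4^3+\eta_5^3+\eta_6^3+\eta_7^3+\eta_8^3,
\end{equation*}
and the claim follows from the triangle inequality applied to this identity. (We do not need the second delta function, $\delta(b(\eta))$, for this estimate.) In view of \eqref{eq-33}, it therefore suffices to prove the pointwise bound
\begin{equation*}
 F_{\mu,\eps}(\eta_1) \le \sum_{l=2}^{8} F_{\mu,\eps}(\eta_l)
\end{equation*}
on the set $\{|\eta_1|^3 \le \sum_{l=2}^8 |\eta_l|^3\}$, since then the exponential factor in $M_{F_{\mu,\eps}}$ is $\le 1$ everywhere on the support of $\delta(a)\delta(b)$ and the desired inequality $M_{F_{\mu,\eps}} \le M$ drops out.

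To establish this pointwise bound, I would rewrite $F_{\mu,\eps}(k) = \mu\,g(|k|^3)$, where
\begin{equation*}
 g(s) := \frac{s}{1+\eps s}, \qquad s\ge 0,
\end{equation*}
and then verify two elementary properties of $g$: (i) $g$ is non-decreasing on $[0,\infty)$, and (ii) $g$ is concave on $[0,\infty)$ with $g(0)=0$. Both are immediate from $g'(s)=(1+\eps s)^{-2}\ge 0$ and $g''(s)=-2\eps(1+\eps s)^{-3}\le 0$.

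From (ii) one deduces subadditivity $g(s+t)\le g(s)+g(t)$ by the standard concavity argument: for $s,t>0$,
\begin{equation*}
 g(s) = g\Bigl(\tfrac{s}{s+t}(s+t)+\tfrac{t}{s+t}\cdot 0\Bigr)
 \ge \tfrac{s}{s+t}g(s+t),
\end{equation*}
and similarly for $g(t)$; adding these two inequalities gives $g(s)+g(t)\ge g(s+t)$. Iterating, $g\bigl(\sum_i s_i\bigr)\le \sum_i g(s_i)$ for any finite collection of non-negative reals. Combined with monotonicity, this yields the master inequality: if $s\le \sum_i s_i$ with $s,s_i\ge 0$, then $g(s) \le g(\sum_i s_i) \le \sum_i g(s_i)$. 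Applying this with $s=|\eta_1|^3$ and $s_l = |\eta_l|^3$ for $l=2,\dots,8$, and multiplying by $\mu\ge 0$, gives exactly the desired pointwise bound on $F_{\mu,\eps}$.

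There is no real obstacle here; the content of the proposition is entirely captured by the fact that $\mu s /(1+\eps s)$ is a concave, increasing function of $s\ge 0$ vanishing at the origin, together with the arithmetic identity forced by $a(\eta)=0$. The role of the $\eps$-regularization is precisely to preserve this concavity (and hence subadditivity), which the unregularized weight $\mu|k|^3$ also trivially satisfies (as a linear function of $|k|^3$), so the argument covers both the $\eps>0$ and $\eps=0$ cases uniformly.
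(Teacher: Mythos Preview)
Your proof is correct and follows essentially the same route as the paper: both reduce to the pointwise inequality $F_{\mu,\eps}(\eta_1)\le\sum_{l=2}^8 F_{\mu,\eps}(\eta_l)$ on $\{a(\eta)=0\}$, using that $|\eta_1|^3\le\sum_{l\ge 2}|\eta_l|^3$ together with the monotonicity and subadditivity of $s\mapsto s/(1+\eps s)$. The only cosmetic difference is that the paper verifies subadditivity by the direct split $\frac{\sum s_i}{1+\eps\sum s_i}=\sum_i\frac{s_i}{1+\eps\sum s_j}\le\sum_i\frac{s_i}{1+\eps s_i}$, whereas you derive it from concavity; the content is the same.
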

\begin{proof}
We see that the claim \eqref{eq-35} reduces to proving
$$ F_{\mu,\eps}(\eta_1)\le \sum_{l=2}^8 F_{\mu,\eps}(\eta_l), \text{ when } a(\eta)=b(\eta)=0$$
since then
$ e^{F_{\mu,\eps}(\eta_1)-\sum_{l=2}^8F_{\mu,\eps}(\eta_l)} \le e^0 =1$.
In fact, we only need $a(\eta)=0$ for this to hold.

Since $a(\eta)=0$ implies $\eta_1^3=\sum_{l=2}^8 (-1)^l \eta_l^3$,
\begin{equation}
\begin{split}
F_{\mu,\eps}(\eta_1) =\mu\frac {|\eta_1|^3}{1+\eps |\eta_1|^3}&=\mu\frac {|\sum_{l=2}^8 (-1)^l \eta_l^3|}{1+\eps|\sum_{l=2}^8 (-1)^l\eta_l^3|}\le \mu\frac {\sum_{l=2}^8 |\eta_l^3|}{1+\eps \sum_{l=2}^8  |\eta_l^3|}\\
&=\sum_{l=2}^8 \frac {\mu |\eta_l|^3}{1+\eps \sum_{l=2}^8  |\eta_l|^3} \le \sum_{l=2}^8 F_{\mu,\eps}(\eta_l),
\end{split}
\end{equation}where we have used the fact that $t\mapsto \frac {t}{1+\eps t} $ is increasing on $[0,\infty)$.
\end{proof}
\begin{remark}
 From the proof we can easily see that Proposition \ref{prop-sublinear} remains true if $F_{\mu,\eps}$ is replaced by $F$ where $F(k)= \widetilde{F}(|k|^3)$ with $\widetilde{F}$ increasing and $\widetilde{F}(a+b)\le \widetilde{F}(a)+ \widetilde{F}(b)$ for $a,b\ge 0$. Thus Proposition \ref{prop-sublinear} holds for a much larger class of functions than the one given in \eqref{eq-14}. However, for our goal of proving Theorem \ref{thm-exp-decay},  the class of functions in \eqref{eq-14} is the one we need.
\end{remark}

Combining \eqref{eq-9}, \eqref{eq-29}, and Corollary \ref{cor-bilinear} with Proposition \ref{prop-sublinear} and Parseval's identity, we can easily deduce
\begin{corollary} There exist a constant $C>0$ such that for $F_{\mu,\eps}$ defined as above and all $\mu,\eps\ge 0$
\begin{equation}\label{eq-37}
M_{F_{\mu,\eps}}(h_1,\cdots, h_8) \le C\, \Pi_{j=1}^8 \|h_j\|_2
\end{equation}
for all $h_j\in L^2$, j=1,\ldots,8. Moreover
\begin{equation}\label{eq-38}
M_{F_{\mu,\eps}}(h_1,\cdots, h_8)\le C L^{-1/4} \Pi_{j=1}^8 \|h_j\|_2
\end{equation}
provided that there exists at least one $h_j$ supported on $[-s,s]$ and another $h_k$ supported on $[-Ls, Ls]^c$ where $L\gg 1$ and $s\ge 1$.
\end{corollary}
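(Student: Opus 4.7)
The strategy is to combine the pointwise majorisation from Proposition \ref{prop-sublinear}, namely $M_{F_{\mu,\eps}}(h_1,\ldots,h_8)\le M(h_1,\ldots,h_8)$, with the Fourier-side identity \eqref{eq-29} to recast everything in terms of the $8$-linear form $Q$. Both inequalities then reduce to bounds on $|Q(|h_1|^\vee,\ldots,|h_8|^\vee)|$, and since $\widehat{|h_j|^\vee}=|h_j|$ has exactly the same support as $h_j$, any frequency-support hypothesis on $h_j$ transfers verbatim to a Fourier-support hypothesis on $|h_j|^\vee$ of the kind required by Corollary \ref{cor-bilinear}.

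For the first (global) inequality the plan is to apply H\"older in $(t,x)$ with eight $L^8_{t,x}$ factors, yielding $|Q(f_1,\ldots,f_8)|\le\prod_{i=1}^8\|e^{-t\partial_x^3}f_i\|_{L^8_{t,x}}$. One application of the Airy Strichartz inequality \eqref{eq:airy-strichartz} per factor, followed by Parseval's identity $\||h_j|^\vee\|_2=\|h_j\|_2$, produces the claim with $C=(2\pi)^3\mathcal{A}^8$.

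For the refined inequality, assume $h_j$ is supported on $[-s,s]$ and $h_k$ on $[-Ls,Ls]^c$. I would group the eight Airy evolutions in the integrand of $Q$ into four pairs, arranging that one distinguished pair be $e^{-t\partial_x^3}|h_j|^\vee\cdot e^{-t\partial_x^3}|h_k|^\vee$. H\"older's inequality in $(t,x)$ with four $L^4_{t,x}$ factors then estimates $|Q|$ by the product of four $L^4_{t,x}$ norms of the paired Airy evolutions. The distinguished pair contributes $CL^{-1/4}\|h_j\|_2\|h_k\|_2$ by Corollary \ref{cor-bilinear} (legitimately applied thanks to the support observation of the first paragraph), while each of the three remaining pairs is handled by H\"older $L^8_{t,x}\cdot L^8_{t,x}\to L^4_{t,x}$ together with the Strichartz estimate $\|e^{-t\partial_x^3}u\|_{L^8_{t,x}}\lesssim\|u\|_2$, contributing $\|u\|_2\|v\|_2$ per pair. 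A final use of Parseval on each $|h_\ell|^\vee$ translates the resulting $L^2$ norms back into $\|h_\ell\|_2$, giving the advertised bound.

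No real obstacle remains: the nonlinear and weighted content is already encoded in Proposition \ref{prop-sublinear} and the bilinear refinement, and the deduction above is just H\"older bookkeeping of which pair to apply the bilinear estimate to. The sole routine item to verify is the support transfer under $|\,\cdot\,|^\vee$, which is immediate.
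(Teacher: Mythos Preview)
Your proposal is correct and follows exactly the approach indicated in the paper: reduce to $M$ via Proposition \ref{prop-sublinear}, pass to $Q$ via \eqref{eq-29}, and then estimate $Q$ using H\"older together with the Strichartz inequality \eqref{eq:airy-strichartz} for the unrefined bound and Corollary \ref{cor-bilinear} for the refined bound, with Parseval converting $\||h_j|^\vee\|_2$ back to $\|h_j\|_2$. Your observation that $\widehat{|h_j|^\vee}=|h_j|$ inherits the support of $h_j$ is precisely the routine check needed to justify invoking Corollary \ref{cor-bilinear}.
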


\begin{remark}
The bounds \eqref{eq-37} and \eqref{eq-38} are surprising since a-priori it is not clear from the definition \eqref{eq-33} whether there exists an unbounded function $F$ such that $M_F$ is bounded on $L^2$. It is even more surprising that for the super-quadratic function $F(k)= \mu|k|^3$, the corresponding $M_F$ is bounded on $L^2$ for all $\mu\ge 0$ with a constant \emph{independent} of $\mu$. As the proof of Proposition \ref{prop-sublinear} shows this stems from the fact that in the definition of $M_F$ one integrates over the subset
$\{\eta\in\mathbb{R}^8:\, a(\eta)=0\}$ of $\mathbb{R}^8$. That restrictions in the integration to subspaces can lead to the boundedness of exponentially twisted functionals similar to $M_F$ was probably noticed first in  \cite{Erdogan-Hundertmark-Lee:2010:exponential-decay-dispersion-management-solitons}.
\end{remark}

The following proposition is the key to the proof of Theorem \ref{thm-exp-decay}. Let $F_{\mu,\eps}$ be defined as above for some $\eps>0, \mu>0$. Let $s>1$, we set
\begin{equation}
 \widehat{f}_>:= \widehat{f}1_{[-s^2,s^2]^c},\text{ and }\|\widehat{f}\|_{\mu,s,\eps}:= \|e^{F_{\mu,\eps}}\widehat{f}_>\|_2,
\end{equation}where $1_\Omega$ denotes the indicator function of the set $\Omega$.

\begin{proposition}\label{prop-bootstrap}
If $f$ is a weak solution to the Euler-Lagrange equation \eqref{eq:euler-lagrange} as defined in \eqref{eq:weak-solution-integral-form} with $\|f\|_2=1$. Then for $\mu=s^{-6}$ with $s\gg 1$, there exists a constant $C>0$ such that
\begin{equation}\label{eq-39}
\omega \|\widehat{f}\|_{s^{-6},\,s,\,\eps}\le o_1(1) \|\widehat{f}\|_{s^{-6},\,s,\,\eps}+C\sum_{l=2}^7 \|\widehat{f}\|^l_{s^{-6},\,s,\,\eps,}+o_2(1),
\end{equation} where $o_i(1)\to 0$ uniformly in $\eps >0$ as $s\to\infty$, $i=1,2$; the constant $C>0$ is independent of $\eps$ and $s$.
\end{proposition}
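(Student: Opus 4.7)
My plan is to substitute into the weak Euler-Lagrange equation \eqref{eq:weak-solution-integral-form} the test function $g\in L^2$ defined by $\widehat g := e^{2F_{\mu,\eps}}\widehat f_>/\|\widehat f\|_{\mu,s,\eps}$, which is legitimate because $F_{\mu,\eps}$ is bounded for each fixed $\eps>0$ (and the case $\|\widehat f\|_{\mu,s,\eps}=0$ is trivial). Since $\widehat f_>\widehat f=|\widehat f_>|^2$ on $\supp\widehat f_>$, Parseval's identity gives
$$
\omega\langle g,f\rangle
= \frac{\omega}{\|\widehat f\|_{\mu,s,\eps}}\int e^{2F_{\mu,\eps}(k)}|\widehat f_>(k)|^2\,dk
= \omega\|\widehat f\|_{\mu,s,\eps},
$$
so it suffices to bound $|Q(g,f,\ldots,f)|$. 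From \eqref{eq-28} combined with \eqref{eq-34}, this quantity is dominated by
$$
(2\pi)^{-3}M_{F_{\mu,\eps}}\bigl(e^{-F_{\mu,\eps}}|\widehat g|,\,e^{F_{\mu,\eps}}|\widehat f|,\,\ldots,\,e^{F_{\mu,\eps}}|\widehat f|\bigr),
$$
whose first argument has $L^2$-norm exactly $1$ by construction.

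\textbf{Three-way frequency decomposition.}
Next I would write $\widehat f=\widehat f^{(1)}+\widehat f^{(2)}+\widehat f^{(3)}$ with respective supports $|k|\le s$, $s<|k|\le s^2$, and $|k|>s^2$ (so $\widehat f^{(3)}=\widehat f_>$), and expand by multilinearity in each of the seven $\widehat f$-slots, producing $3^7$ terms. The key $\eps$-uniform weighted $L^2$ bounds, which rely on the choice $\mu=s^{-6}$ forcing $F_{\mu,\eps}\le 1$ on $|k|\le s^2$, are
$$
\|e^{F_{\mu,\eps}}|\widehat f^{(1)}|\|_2\le e,\quad
\|e^{F_{\mu,\eps}}|\widehat f^{(2)}|\|_2\le e\,\alpha(s),\quad
\|e^{F_{\mu,\eps}}|\widehat f^{(3)}|\|_2=\|\widehat f\|_{\mu,s,\eps},
$$
where $\alpha(s):=\|\widehat f\,1_{\{|k|\ge s\}}\|_2\to 0$ as $s\to\infty$ by dominated convergence (and is independent of $\eps$).

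\textbf{Case analysis.}
Index each of the $3^7$ terms by the triple $(j,k,m)$ counting the slots of type $(1),(2),(3)$ respectively, so $j+k+m=7$. If $m\ge 2$, apply the straight $L^2$ bound \eqref{eq-37} to get $C\|\widehat f\|_{\mu,s,\eps}^m$, which after summing contributes $C\sum_{l=2}^7\|\widehat f\|_{\mu,s,\eps}^l$ and accounts for the middle term in \eqref{eq-39}. If $m\in\{0,1\}$ and $j\ge 1$, the first argument is supported on $|k|\ge s^2$ while some other argument is supported on $|k|\le s$, so I would apply the bilinear-type bound \eqref{eq-38} with $s_0=s$, $L=s$ to extract a factor $L^{-1/4}=s^{-1/4}$. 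If $m\in\{0,1\}$ and $j=0$ (so $k=7-m\ge 1$), the bound \eqref{eq-37} alone produces at least one factor of $\alpha(s)$. Either way, the total $m=0$ contribution is $o_2(1)$ and the total $m=1$ contribution is $o_1(1)\|\widehat f\|_{\mu,s,\eps}$, yielding \eqref{eq-39}.

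\textbf{Main difficulty.}
The delicate point is producing $\eps$-uniform smallness in the ``medium-only'' subcases ($j=0$, $k\ge 1$): the weight $e^{F_{\mu,\eps}}$ is not itself small on $s<|k|\le s^2$, so the gain must come entirely from $\alpha(s)\to 0$, which is $\eps$-independent. This is possible precisely because the choice $\mu=s^{-6}$ prevents the weight from exceeding a universal constant on $|k|\le s^2$, and because the constants in \eqref{eq-37}--\eqref{eq-38} are themselves $\eps$-independent by Proposition \ref{prop-sublinear}. Everything else is a routine bookkeeping of the $3^7$ terms.
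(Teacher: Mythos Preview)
Your proof is correct and follows essentially the same route as the paper's. The only differences are organizational: you normalize the test function up front (so no final cancellation of one factor of $\|h_>\|_2$ is needed), you perform the three-scale frequency split $|k|\le s$, $s<|k|\le s^2$, $|k|>s^2$ in one shot rather than the paper's two-stage $h=h_<+h_>$ followed by $h_<=h_\ll+h_\sim$, and you invoke the $M_{F_{\mu,\eps}}$ bounds \eqref{eq-37}--\eqref{eq-38} directly rather than first passing from $M_F$ to $M$ via Proposition~\ref{prop-sublinear}. The case analysis ($m\ge 2$ giving the polynomial sum, $m\le 1$ with a low-frequency slot giving $s^{-1/4}$ via the bilinear gain, and $m\le 1$ with only medium-frequency slots giving a factor $\alpha(s)$) mirrors exactly the paper's $A$, $B_1$, $B_2$ decomposition.
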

Let us postpone the proof of this proposition to the next section and finish the proof of Theorem \ref{thm-exp-decay}.

\begin{proof}[Proof of Theorem \ref{thm-exp-decay}] Given Proposition \ref{prop-bootstrap}, the proof is similar to the proof of exponential decay of dispersion management solitions given in \cite{Erdogan-Hundertmark-Lee:2010:exponential-decay-dispersion-management-solitons}.
We set
\begin{equation*}
G(v):=\frac \omega 2 v-C\sum_{l=2}^7 v^l, \text{ for } v\ge 0.
\end{equation*}
Invoking \eqref{eq-39}, if choosing $s$ large enough such that $o_1(1)\le \omega/2$, we obtain
\begin{equation}
G\bigl(\|\widehat{f}\|_{s^{-6},\,s,\,\eps}\bigr)\le o_2(1).
\end{equation} We observe that the graph of $G$ is concave in $[0,\infty)$ and intersects the $x$-axis only at two points: $v=0$ and $v=x_0$ for some $x_0>0$. Let $v_0,v_1>0$ such that $G(v_0)=G(v_1)=G_{\max}/2$, where $G_{\max}=\max\{G(v):~v\ge 0\}$. Again we take $s$ to be large enough such that $o_2(1)\le G_{\max}/2$. Then we have a dichotomy,
\begin{equation}\label{eq-40}
\text{ either }\|\widehat{f}\|_{s^{-6},s,1}\le v_0, \text{ or } \|\widehat{f}\|_{s^{-6},s,1}\ge v_1.
\end{equation} However the second choice is impossible if $s$ is chosen to be large, because by definition
$$F_{s^{-6},1}(k)=\frac {s^{-6}|k|^3}{1+|k|^3}\le s^{-6}\le 1,$$
which yields
$$\|\widehat{f}\|_{s^{-6},s,1} =\|e^{F_{s^{-6},1}}\widehat{f}_>\|_2\le e^{s^{-6}}\|\widehat{f}1_{[-s^2,s^2]^c}\|_2 \to 0, \text{ as } s\to \infty.$$
Now we fix such a large $s>0$ and consider the function $\eps \mapsto \|\widehat{f}\|_{s^{-6},s,\eps}$, which is continuous by the dominated convergence theorem for $\eps>0$. Again by \eqref{eq-39},
\begin{equation}\label{eq-42}
G\bigl(\|\widehat{f}\|_{s^{-6},s,\eps}\bigr)\le G_{\max}/{2}
\end{equation}
for all $\eps>0$. Hence by continuity, we must have that $\|\widehat{f}\|_{s^{-6},\,s,\,\eps}$ is in the same connected component of $G^{-1}\bigl([0,G_{\max}/2]\bigr)=[0,v_0]\cup [v_1,\infty)$. On the other hand, since we already know that $\|\widehat{f}\|_{s^{-6},s,1}\in [0,v_0]$, we deduce that
\begin{equation}\label{eq-43}
\|\widehat{f}\|_{s^{-6},\,s,\,\eps}\in [0,v_0], \,\forall\, \eps>0.
\end{equation}
This implies, by the monotone convergence theorem,
\begin{equation}
\|\widehat{f}\|_{s^{-6},\,s,\,0}=\lim_{\eps \to 0} \|\widehat{f}\|_{s^{-6},\,s,\,\eps}\le v_0.
\end{equation}
In other words,
\begin{equation}\label{eq-44}
 e^{s^{-6}|\cdot|^3}\widehat{f} 1_{[-s^2,s^2]^c}\in L^2
\end{equation}
and since $e^{s^{-6}|\cdot|^3}$ is bounded on $[-s^2,s^2]$ this yields
\begin{equation}\label{eq-45}
k\mapsto e^{s^{-6}|k|^3}\widehat{f}(k) \in L^2.
\end{equation}Let $\mu_0=s^{-6}$ for this $s>0$. Then the super Gaussian decay in Theorem \ref{thm-exp-decay} is established.

We are left with proving that $f$ is an entire function on the complex plane $\mathbb{C}$. Indeed, by the Cauchy-Schwarz inequality, for any $\mu\in \mathbb{R}$, we have
\begin{equation}\label{eq-23}
e^{\mu |k|}\widehat{f}(k)=e^{\mu|k|-\mu_0|k|^3} e^{\mu_0 |k|^3} \widehat{f}(k) \in L^1(\mathbb{R}),
\end{equation}
Then for any $z\in \mathbb{C}$, we can always choose $\mu>|z|$ such that
\begin{equation}
f(z)=(2\pi)^{-1/2} \int e^{izk}\widehat{f}(k)dk=(2\pi)^{-1/2} \int e^{izk-\mu|k|}e^{\mu|k|} \widehat{f}(k)dk.
\end{equation} Since  the first factor $e^{izk-\mu|k|}$ is bounded and the second factor is in $L^1$ by \eqref{eq-23}, $f$ is an entire function.
\end{proof}
It remains to prove Proposition \ref{prop-bootstrap}, which we carry out in the next section.

\section{The bootstrap argument}\label{sec:bootstrap}
In this section, we prove Proposition \ref{prop-bootstrap}, for which we only have the definition of weak solutions in \eqref{eq:weak-solution-integral-form} and the definition of $Q$ at our disposal.
We set $F=F_{\mu,\eps}$ for $F_{\mu,\eps}$ defined in \eqref{eq-14} and define
$f_>$, $h$, and $h_>$ by
\begin{equation}
\widehat{f}_>=\widehat{f}1_{[-s^2,s^2]^c},\,{h}(k)=e^{F(k)}\widehat{f}(k),\,
{h}_>(k):=e^{F(k)}\widehat{f}_>(k).
\end{equation}
\begin{proof}[Proof of Proposition \ref{prop-bootstrap}.]
We use $g=e^{2F(P)}f_>$ with $P=-i\partial_x$ in \eqref{eq:weak-solution-integral-form}. Using that the operator $e^{F(P)}$ is simply multiplying with $e^{2F(k)}$ in Fourier space, the representation \eqref{eq-27} of $Q$, and $h^\vee$ for the inverse Fourier transform of $h$, one sees
\begin{equation}\label{eq-16}
\begin{split}
\omega \|e^{F}\widehat{f}_>\|^2_2
&=\omega \langle e^{F}\widehat{f}_>(k),e^{F}\widehat{f}_>(k)\rangle
=\omega \langle e^{2F}\widehat{f}_>,\widehat{f}\rangle
=
\omega \langle e^{2F(P)}f_>,f\rangle \\
&=Q(e^{2F(P)}f_>, f,f,f,f,f,f,f) =Q((e^{F}h_>)^\vee, f,f,f,f,f,f,f)\\
&=Q((e^{F}h_>)^\vee,(e^{-F}h)^\vee,(e^{-F}h)^\vee,(e^{-F}h)^\vee,(e^{-F}h)^\vee,(e^{-F}h)^\vee,(e^{-F}h)^\vee,(e^{-F}h)^\vee)\\
&=:Q_F.
\end{split}
\end{equation}
Then by \eqref{eq-28}
\begin{equation}\label{eq-46}
|Q_F|\le C M_F(h_>,h,h,h,h,h,h,h)\le CM(h_>,h,h,h,h,h,h,h),
\end{equation} where the last inequality follows from Proposition \ref{prop-sublinear}. Continuing \eqref{eq-46}, we split $h$ and use that the operator $M$ is sublinear in each component,
\begin{equation}\label{eq-49}
\begin{split}
M(h_>,h,h,h,h,h,h,h)&\le M(h_>,h_<,h_<,h_<,h_<,h_<,h_<,h_<)+\\
&\quad+\sum_{j_2,\cdots,j_8\in \{>,<\}, \atop \text{at least one } j_l=~>} M(h_>,h_{j_2},h_{j_3},h_{j_4},h_{j_5},h_{j_6},h_{j_7},h_{j_8})=:A+B.
\end{split}
\end{equation}
We split further $h_<=h_{\ll}+h_\sim$, where
the low frequency part $\widehat{h}_\ll:=\widehat{h}1_{[-s,s]}$ and the median frequency part $\widehat{h}_\sim:= \widehat{h}1_{[-s^2,s^2]\setminus [-s,s]}$.

We estimate $A$ by using the bilinear Airy Strichartz estimate in Lemma \ref{le:bilinear_airy}:
\begin{equation}\label{eq-50}
\begin{split}
A=&M(h_>,h_<,h_<,h_<,h_<,h_<,h_<,h_<)\\
&\le M(h_>,h_\ll,h_<,h_<,h_<,h_<,h_<,h_<)+M(h_>,h_\sim,h_<,h_<,h_<,h_<,h_<,h_<) \\
&\le s^{-1/4}\|h_>\|_2 \|h_\ll\|_2\|h_<\|_2^6+ \|h_>\|_2 \|h_\sim\|_2\|h_<\|_2^6\\
&=\|h_>\|_2\bigl( s^{-1/4}\|h_\ll\|_2+\|h_\sim\|_2\bigr)\|h_<\|_2^6.
\end{split}
\end{equation}
Recalling that $\|f\|_2=1$, then
\begin{equation}
\begin{split}
\|h_<\|_2&=\|e^{F_{\mu,\eps}} \widehat{f}_<\|_2\le \|e^{\mu|k|^3} \widehat{f}_<\|_2\le e^{\mu s^6}\|f\|_2,\\
\|h_\ll\|_2 &=\|e^{F_{\mu,\eps}}\widehat{f}_\ll\|_2\le e^{\mu s^3}\|f\|_2,\\
\|h_\sim\|_2 &=\|e^{F_{\mu,\eps}}\widehat{f}_\sim\|_2 \le e^{\mu s^6}\|f_\sim\|_2,
\end{split}
\end{equation} we obtain
\begin{equation}\label{eq-51}
A\le C \|h_>\|_2 \bigl(s^{-1/4}e^{\mu s^3-\mu s^6} +\|f_\sim\|_2\bigr)e^{7\mu s^6}.
\end{equation}

Now we turn to estimate $B$.
\begin{equation}
\begin{split}
B&\le \sum_{j_2,\cdots,j_8\in \{>,<\}, \atop \text{exactly one } j_l=~>} M(h_>,h_{j_2},h_{j_3},h_{j_4},h_{j_5},h_{j_6},h_{j_7},h_{j_8})+\\
&\quad+\sum_{j_2,\cdots,j_8\in \{>,<\}, \atop \text{two and more } j_l=~>} M(h_>,h_{j_2},h_{j_3},h_{j_4},h_{j_5},h_{j_6},h_{j_7},h_{j_8})=:B_1+B_2.
\end{split}
\end{equation}
For $B_2$,
\begin{equation}
B_2\lesssim \|h_>\|_2\Pi_{l=2}^8 \|h_{j_l}\|_2
\lesssim \|h_>\|_2 \left( \sum_{l=2}^7 \|h_<\|^{7-l}_2\|h_>\|_2^l\right)
\lesssim \|h_>\|_2 \,e^{5\mu s^6}\sum_{l=2}^7 \|h_>\|_2^l
\end{equation} where we have used that $\|h_<\|_2\lesssim e^{\mu s^6} \|f_<\|_2\lesssim e^{\mu s^6}$.

For $B_1$, we split one of the $h_<$ into $h_<=h_\ll+h_\sim $ and then use the sublinearity of $M$,
\begin{equation}
\begin{split}
B_1\lesssim \|h_>\|_2 &\left(s^{-1/4}\|h_\ll\|_2+\|h_\sim\|_2\right)\|h_<\|^5_2\|h_>\|_2\\
&\lesssim \|h_>\|^2_2 \left(s^{-1/4}e^{\mu s^3-\mu s^6}+\|f_\sim\|_2\right)e^{\mu s^6}\|h_<\|^5_2\\
&\quad\quad\lesssim \|h_>\|^2_2 \left(s^{-1/4}e^{\mu s^3-\mu s^6}+\|f_\sim\|_2\right)e^{6\mu s^6}.
\end{split}
\end{equation}
Thus we conclude that
\begin{equation}\label{eq-52}
B\le B_1+B_2\lesssim \|h_>\|^2_2 \left(s^{-1/4}e^{\mu s^3-\mu s^6}+\|f_\sim\|_2\right)e^{6\mu s^6}+ e^{5\mu s^6}\|h_>\|_2\sum_{l=2}^7 \|h_>\|_2^l.
\end{equation}
Therefore from \eqref{eq-16}, \eqref{eq-46}, \eqref{eq-49}, \eqref{eq-51} and \eqref{eq-52}, we have
\begin{equation}\label{eq-53}
\begin{split}
\omega \|\widehat{h}_>\|_2^2&\lesssim  \|h_>\|_2 \bigl(s^{-1/4}e^{\mu s^3-\mu s^6} +\|f_\sim\|_2\bigr)e^{7\mu s^6}+\\
 &+\|h_>\|^2_2 \left(s^{-1/4}e^{\mu s^3-\mu s^6}+\|f_\sim\|_2\right)e^{6\mu s^6}+ e^{5\mu s^6}\|h_>\|_2\sum_{l=2}^7 \|h_>\|_2^l
 \end{split}
\end{equation}Canceling one $\|\widehat{h}_>\|_2$ on both sides, we see that
\begin{equation}\label{eq-54}
\begin{split}
\omega \|\widehat{h}_>\|_2&\lesssim  \bigl(s^{-1/4}e^{\mu s^3-\mu s^6} +\|f_\sim\|_2\bigr)e^{7\mu s^6}+\\
 &\|h_>\|_2\left(s^{-1/4}e^{\mu s^3-\mu s^6}+\|f_\sim\|_2\right)e^{6\mu s^6}+ e^{5\mu s^6}\sum_{l=2}^7 \|h_>\|_2^l
 \end{split}
\end{equation}Since $ \|f_\sim \|_2=\|\widehat{f}1_{[-s^2,s^2]\setminus [-s,s]}\|_2 \le \|\widehat{f}1_{[-s,s]^c}\|_2 =o(1)$ as $s\to\infty$, and $e^{6\mu s^6}=e^6$ if taking $\mu=s^{-6}$, we conclude that
\begin{equation}
\omega \|h_>\|_2\le o_1(1)\|h_>\|_2+C\sum_{l=2}^7 \|h_>\|_2^l+o_2(1).
\end{equation} Therefore the proof of Proposition \ref{prop-bootstrap} is complete.
\end{proof}

\textbf{Acknowledgements.} The research was carried out when S. Shao visited the math department at the University of Illinois, Urbana Champaign, and he was deeply grateful for its hospitality. D. Hundertmark was supported by NSF grant DMS-0803120. During the early preparation of this work, S. Shao was supported by the National Science Foundation under agreement DMS-0635607. Any opinions, findings, and conclusions or recommendations expressed in this paper are those of the authors and do not necessarily reflect the views of the National Science Foundation.

%\bibliography{C:/Work/Research/refs}

\begin{thebibliography}{10}

\bibitem{Bahouri-Gerard:1999:profile-wave}
H.~Bahouri and P.~G{\'e}rard.
\newblock High frequency approximation of solutions to critical nonlinear wave
  equations.
\newblock {\em Amer. J. Math.}, 121(1):131--175, 1999.

\bibitem{Begout-Vargas:2007:profile-schrod-higher-d}
P.~B{\'e}gout and A.~Vargas.
\newblock Mass concentration phenomena for the {$L\sp 2$}-critical nonlinear
  {S}chr\"odinger equation.
\newblock {\em Trans. Amer. Math. Soc.}, 359(11):5257--5282, 2007.

\bibitem{Bennett-Bez-Carbery-Hundertmark:2008:heat-flow-of-strichartz-norm}
J.~Bennett, N.~Bez, A.~Carbery, and D.~Hundertmark.
\newblock Heat-flow monotonicity of {S}trichartz norms.
\newblock {\em Analysis and PDE, Vol. 2 (2009), No. 2, 147--158}.

\bibitem{Bulut:2009:maximizer-wave}
A.~Bulut.
\newblock Maximizers for the {S}trichartz inequalities for the {W}ave equation.
\newblock {\em arXiv:0905.1678}.

\bibitem{Carles-Keraani:2007:profile-schrod-1d}
R.~Carles and S.~Keraani.
\newblock On the role of quadratic oscillations in nonlinear {S}chr\"odinger
  equations. {II}. {T}he {$L\sp 2$}-critical case.
\newblock {\em Trans. Amer. Math. Soc.}, 359(1):33--62 (electronic), 2007.

\bibitem{Carneiro-2009-sharp-strichartz}
E.~Carneiro.
\newblock A sharp inequality for the {S}trichartz norm.
\newblock {\em Int. Math. Res. Not. IMRN}, (16):3127--3145, 2009.

\bibitem{Christ:extremiser-radon-like-transform}
M.~Christ.
\newblock On extremisers for a {R}adon-like transform.
\newblock {\em Preprint.}

\bibitem{Christ:quasiextremiser-radon-like-transform}
M.~Christ.
\newblock Quasi-extremals for a {R}adon-like transform.
\newblock {\em Preprint.}

\bibitem{Christ-Quilodran:Gaussians-rarely-extremize-non-L2-restriction-parabo%
loids}
M.~Christ and R.~Quilodr\'an.
\newblock Gaussians rarely extremize adjoint {F}ourier restriction inequalities
  for paraboloids.
\newblock {\em Preprint.}

\bibitem{Christ-Shao:extremal-for-sphere-restriction-I-existence}
M.~Christ and S.~Shao.
\newblock Existence of extremals for a {F}ourier restriction inequality.
\newblock {\em arXiv:1006:4319.}

\bibitem{Christ-Shao:extremal-for-sphere-restriction-II-characterizations}
M.~Christ and S.~Shao.
\newblock On the extremisers of an adjoint {F}ourier restriction inequality.
\newblock {\em arXiv:1006:4318.}

\bibitem{Craig:1985:water-wave-KdV}
W.~Craig.
\newblock An existence theory for water waves and the {B}oussinesq and
  {K}orteweg-de {V}ries scaling limits.
\newblock {\em Comm. Partial Differential Equations}, 10(8):787--1003, 1985.

\bibitem{Erdogan-Hundertmark-Lee:2010:exponential-decay-dispersion-management-%
solitons}
B.~Erdogan, D.~Hundertmark, and Y.~R. Lee.
\newblock Exponential decay of dispersion management solitons.
\newblock {\em arXiv: 0806.1373}.

\bibitem{Foschi:2007:maxi-strichartz-2d}
D.~Foschi.
\newblock Maximizers for the {S}trichartz inequality.
\newblock {\em J. Eur. Math. Soc. (JEMS)}, 9(4):739--774, 2007.

\bibitem{Hundertmark-Lee:2009:decay-smoothness-for-solutions-dispersion-manage%
d-NLS}
D.~Hundertmark and Y.~R. Lee.
\newblock Decay estimates and smoothness for solutions of the dispersion
  managed non-linear {S}chr\"odinger equation.
\newblock {\em Comm. Math. Phys.}, 286(3):851--873, 2009.

\bibitem{Hundertmark-Zharnitsky:2006:maximizers-Strichartz-low-dimensions}
D.~Hundertmark and V.~Zharnitsky.
\newblock On sharp {S}trichartz inequalities in low dimensions.
\newblock {\em Int. Math. Res. Not.}, pages Art. ID 34080, 18, 2006.

\bibitem{Jiang-Pausader-Shao:2010:4th-NLS}
J.~Jiang, B.~Pausader, and S.~Shao.
\newblock The linear profile decomposition for the fourth order {S}chr\"odinger
  equation.
\newblock {\em J. {D}ifferential {E}quations}, 249:2521--2547, 2010.

\bibitem{Kenig-Ponce-Vega:1991:dispersive-estimates}
C.~Kenig, G.~Ponce, and L.~Vega.
\newblock Oscillatory integrals and regularity of dispersive equations.
\newblock {\em Indiana Univ. Math. J.}, 40(1):33--69, 1991.

\bibitem{Keraani:2001:profile-schrod-H^1}
S.~Keraani.
\newblock On the defect of compactness for the {S}trichartz estimates of the
  {S}chr\"odinger equations.
\newblock {\em J. Differential Equations}, 175(2):353--392, 2001.

\bibitem{Kunze:2003:maxi-strichartz-1d}
M.~Kunze.
\newblock On the existence of a maximizer for the {S}trichartz inequality.
\newblock {\em Comm. Math. Phys.}, 243(1):137--162, 2003.

\bibitem{Lions-1984-cc-locally-compact-I}
P.-L. Lions.
\newblock The concentration-compactness principle in the calculus of
  variations. {T}he locally compact case. {I}.
\newblock {\em Ann. Inst. H. Poincar\'e Anal. Non Lin\'eaire}, 1(2):109--145,
  1984.

\bibitem{Lions-1984-cc-locally-compact-II}
P.-L. Lions.
\newblock The concentration-compactness principle in the calculus of
  variations. {T}he locally compact case. {II}.
\newblock {\em Ann. Inst. H. Poincar\'e Anal. Non Lin\'eaire}, 1(4):223--283,
  1984.

\bibitem{Lions-1985-cc-limit-case-I}
P.-L. Lions.
\newblock The concentration-compactness principle in the calculus of
  variations. {T}he limit case. {I}.
\newblock {\em Rev. Mat. Iberoamericana}, 1(1):145--201, 1985.

\bibitem{Lions-1985-cc-limit-case-II}
P.-L. Lions.
\newblock The concentration-compactness principle in the calculus of
  variations. {T}he limit case. {II}.
\newblock {\em Rev. Mat. Iberoamericana}, 1(2):45--121, 1985.

\bibitem{Merle-Vega:1998:profile-schrod}
F.~Merle and L.~Vega.
\newblock Compactness at blow-up time for {$L\sp 2$} solutions of the critical
  nonlinear {S}chr\"odinger equation in 2{D}.
\newblock {\em Internat. Math. Res. Notices}, (8):399--425, 1998.

\bibitem{Schneider-Wayne:2000:KdV-water-wave-zero-surface-tension}
G.~Schneider and C.~E. Wayne.
\newblock The long-wave limit for the water wave problem. {I}. {T}he case of
  zero surface tension.
\newblock {\em Comm. Pure Appl. Math.}, 53(12):1475--1535, 2000.

\bibitem{Schneider-Wayne:2002:KdV-water-wave-surface-tension}
G.~Schneider and C.~E. Wayne.
\newblock The rigorous approximation of long-wavelength capillary-gravity
  waves.
\newblock {\em Arch. Ration. Mech. Anal.}, 162(3):247--285, 2002.

\bibitem{Shao:2008:linear-profile-Airy-Maximizer-Airy-Strichartz}
S.~Shao.
\newblock The linear profile decomposition for the {A}iry equation and the
  existence of maximizers for the {A}iry {S}trichartz inequality.
\newblock {\em Anal. PDE}, 2(1):83--117, 2009.

\bibitem{Shao:2008:maximizers-Strichartz-Sobolev-Strichartz}
S.~Shao.
\newblock Maximizers for the {S}trichartz and the {S}obolev-{S}trichartz
  inequalities for the {S}chr\"odinger equation.
\newblock {\em Electron. J. Differential Equations}, pages No. 3, 13, 2009.

\bibitem{Stein:1993}
E.~M. Stein.
\newblock {\em Harmonic analysis: real-variable methods, orthogonality, and
  oscillatory integrals}, volume~43 of {\em Princeton Mathematical Series}.
\newblock Princeton University Press, Princeton, NJ, 1993.
\newblock With the assistance of Timothy S. Murphy, Monographs in Harmonic
  Analysis, III.

\bibitem{Stovall:quasiextremals-to-convolution-sphere}
B.~Stovall.
\newblock Quasi-extremals for convolution with the surface measure on the
  sphere.
\newblock {\em Preprint.}

\bibitem{Tao:2006-CBMS-book}
T.~Tao.
\newblock {\em Nonlinear dispersive equations}, volume 106 of {\em CBMS
  Regional Conference Series in Mathematics}.
\newblock Published for the Conference Board of the Mathematical Sciences,
  Washington, DC, 2006.
\newblock Local and global analysis.

\end{thebibliography}
%\bibliographystyle{plain}

\end{document}